\tikzset{
	halfarrow/.style={postaction={decorate},
		decoration={markings,mark=at position .5 with
			{\arrow{stealth}}}}}
\newcommand{\R}{\mathbb{R}}
\newcommand{\N}{{\mathbb N}}
\newcommand{\Z}{\mathbb{Z}}
\newcommand{\OP}{\mathcal{O}}
\newcommand{\CP}{\mathcal{C}}
\newtheorem{theorem}{Theorem}[section]
\newtheorem{prop}[theorem]{Proposition}
\newtheorem{lemm}[theorem]{Lemma}
\newtheorem{lemmdefi}[theorem]{Lemma(and Definition)}
\theoremstyle{definition}
\newtheorem{propdefi}[theorem]{Proposition(and Definition)}
\newtheorem{defi}[theorem]{Definition}
\newtheorem{bsp}[theorem]{Example}
\newtheorem{bem}[theorem]{Remark}
\newsavebox{\imagebox}
\DeclareMathOperator{\aff}{aff}
\begin{document}
\title{2-levelness of Marked Poset Polytopes\\
and the Ehrhart Polynomial} 
\author[J. Stricker]{Jan Stricker}
\address{Institute for Mathemtics, Goethe University Frankfurt, Robert-Mayer-Str. 10,\newline
60325 Frankfurt am Main, Germany}
\email{stricker@math.uni-frankfurt.de}
\maketitle

\let\thefootnote\relax
\footnotetext{MSC: Primary: 52B20 $\cdot$ Secondary: 52B12 $\cdot$ 06A07 $\cdot$ 51M20 $\cdot$ 05A15.\\
\textit{Key words and phrases:} 2-level polytopes, marked posets, marked order polytopes, marked chain polytopes, marked chain-order polytopes, Ehrhart polynomial
} 

\begin{abstract}
It is already known that order polytopes and chain polytopes are always 2-level polytopes. In general, this is not true for marked order and marked chain polytopes.\\
We study the geometry of marked order polytopes, marked chain polytopes, and marked chain-order polytopes, providing a comprehensive characterisation of  2-levelness for these polytopes. Furthermore, we present an exact formula for the Ehrhart polynomial of marked order polytopes. Because of their connection to marked chain and marked chain-order polytopes, this polynomial is also the Ehrhart polynomial of these polytopes.
\end{abstract} 

\section{Introduction}
	We call a polytope $Q\subseteq \R^d$ \textit{2-level}, if for every facet of $Q$ there exits a parallel hyperplane, such that all vertices of $Q$ are in the facet or in the parallel hyperplane. Alternatively, a polytope $Q$ is considered to be 2-level if and only if it possesses theta-rank 1 \cite{gouveia2010theta}, or it exhibits a slack matrix with entries restricted to $\{0, 1\}$ \cite{bohn2019enumeration}. These definitions, originating from the realms of semidefinite programming, statistics and polyhedral combinatorics respectively, underscore the inherent multidisciplinary nature of 2-level polytopes within mathematics. A 2-level polytope can be viewed as a generalization of well-known polytopes, including Birkhoff polytopes \cite{ziegler2006}, Hanner polytopes \cite{hanner1956intersections}, and Hansen polytopes \cite{hansen1977certain}. The class of 2-level polytopes also encompasses spanning tree polytopes of series-parallel graphs \cite{grande2017theta}, stable matching polytopes \cite{gusfield1989stable}, certain min up/down polytopes \cite{lee2004min}, and stable set polytopes of perfect graphs \cite{chvatal1975certain}. There is also a stronger connection between 2-level polytopes and compressed polytopes. The main difference between these classes, is that compressed polytopes are always integral 2-level polytopes, where each 2-level polytope is an affine image of a compressed polytope, where compressed polytopes are characterised as integral polytopes all of whose pulling triangulations are unimodular \cite{compressed}. This broad spectrum of connections illustrates the pervasive presence of 2-level polytopes across diverse mathematical domains.\\
To any finite poset $(P,\prec)$ Stanley associated two polytopes, which are lattice polytopes with the same Ehrhart polynomial \cite{stanley1986two}. These are the \textit{order polytope} and the \textit{chain polytope}. Moreover, these poset polytopes were already topic of a lot of research and it was proven, that order polytopes and chain polytopes are 2-level polytopes \cite{ohsugi2001compressed}.
Inspired by the representation theory of complex semi-simple Lie algebras, specifically within the context of PBW-degenerations, Ardila, Bliem, and Salazar \cite{ardila2011gelfand} introduced the concept of \textit{marked order polytopes} and \textit{marked chain polytopes}, generalizing Stanley's order and chain polytope. These polytopes are associated to \textit{marked posets}. Given a finite poset $P$, we define a set of marked elements $P^*\subseteq P$, which includes the extremal elements of $P$. Then we can define a marking $\lambda:P^*\rightarrow\Z$, which gives all elements of $P^*$ an integral value preserving the ordering of $P$. These values introduces bounds for the elements of marked order polytopes and marked chain polytopes.
Their investigation revealed that these polytopes are also indeed lattice polytopes, and for a given marked poset, they have identical Ehrhart polynomials.\\
Motivated by the recent work on linear degeneration of flag varieties \cite{cerulli2017linear}, Fang and Fourie introduced \textit{marked chain-order polytopes} \cite{fang2016marked}. These polytopes represent a class of polytopes, which combines features of marked order and marked chain polytopes. For each order ideal of the poset, one imposes chain conditions on the coordinates in the order ideal, and order conditions on the coordinates in its complement. Significantly, Fang and Fourie established that these marked chain-order polytopes form a family of lattice polytopes with Ehrhart equivalence. In addition, marked chain-order polytopes generalize the classes of marked order and marked chain polytopes. Fang, Fourier and Pegel were also able to characterise reflexivity for this class of polytopes \cite{fang2018minkowski}.\\

In this paper, we mainly focus on characterising 2-levelness for marked order polytopes, marked chain polytopes and marked chain-order polytopes. The main result will be, that marked order and marked chain polytopes are 2-level, if and only if they are already an affine image of an order or a chain polytope.
At first we have to make some general definitions and formulated some important theorems in Section \ref{section2}. These definitions and theorems will be useful throughout the whole paper.
In Theorem \ref{markorder2l}, Theorem \ref{markchain2l} and Theorem \ref{chainorder2l} we will formulate a complete characterisation of 2-levelness for marked order, marked chain and marked chain-order polytopes respectively. These will use different techniques, depending, on what we know about these classes of polytopes. Yet, the characterisations will be very similar to each other.\\
Furthermore, we show a concrete formula for the Ehrhart polynomial for marked order polytopes in Theorem \ref{ehrmarkorder}. Also, this will be a concrete formula for all marked chain-order polytopes. The complex nature of these polytopes will result in a very complex formula, but we will present an example of a marked poset, which will have a nice Ehrhart polynomial.\\

\textbf{Acknowledgements.} This paper was written during a student exchange supervised by Professor Akihiro Higashitani at Osaka University. I am grateful to Professor Akihiro Higashitani and my tutor, Max Kölbl, for their guidance and support during my research at Osaka University.

	\section{Review of relevant Theorems and Definitions}\label{section2}
The set $\R$ (resp. $\Z$, $\N$) of real (resp. integral, natural) numbers has the usual total order. For us the set of natural numbers $\N$ will include the 0. The set $[m]$ denotes the set $\{1,2,\ldots,m\}$.
For a convex polytope $Q \subseteq \R^d$, we denote the set of vertices as $V(Q)$. The set $\aff(F)$ of a set $F\subseteq \R^d$ is the affine hull of $F$. For a poset $(P,\prec)$ we will often just use the notation $P$. We still then use $\prec$ for describing the order of $P$. We denote the set of minimal and maximal elements of $P$ as $\min(P)$ and $\max(P)$.\\
	
	Let us start by defining the polytopes we are working with and some of their basic properties.
	
		\begin{defi}\label{Def2level}
		Let $Q \subseteq \R^d$ be a polytope. We call $Q$ \textit{2-level}, if for any facet $F\subseteq Q$, there exists $t \in \R^d$, such that all vertices $v \in V(Q)\backslash V(F)$ of $Q$ lies in $t+\aff(F)$.
	\end{defi}

	\begin{prop}\label{aquivalent}
	Let $Q \subseteq \R^d$ be a polytope. Then $Q$ is a 2-level polytope, if one of the following is satisfied:\\
	(i) For each hyperplanes $H$, which define a facet $F$,
	\begin{align*}
	\text{there exists } t \in \R^d \text{, such that } V(Q) \subseteq H \cup (H+t).
\end{align*}	

	(ii) Let $\{x \in \R^d\,:\,a_i^tx\leq b_i\, ,\,i\in [m]\}$ be a finite irredundant description for the polytope $Q$ with $a_i \in \R^d$ and $b_i\in \R$. For all $i \in [m]$ there exists $m_i \in \R$, such that every vertex of $Q$ has to fulfill either $a_i^tx=b_i$ or $a_i^tx=b_i+m_i$.
	\end{prop}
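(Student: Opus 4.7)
The proposition is really an equivalence between three different formulations, so the plan is to show that each of (i) and (ii) is in fact equivalent to the definition given in Definition \ref{Def2level}. Throughout, I will use the basic fact that a facet $F$ of a full-dimensional polytope $Q\subseteq\R^d$ has dimension $d-1$, hence $\aff(F)$ is exactly the supporting hyperplane $H$ of $F$, and $F = Q\cap H$ implies $V(F) = V(Q)\cap H$.

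First I would handle (i). The key observation is that $\aff(F) = H$ and the vertices of $Q$ split into those that lie in $H$ (namely $V(F)$) and those that do not (namely $V(Q)\setminus V(F)$). The definition asks that the latter set lie in a single translate $t+\aff(F)=t+H=H+t$, which is exactly the statement that $V(Q)\subseteq H\cup(H+t)$. Conversely, given the containment in $H\cup (H+t)$, every vertex outside of $H$ (equivalently, every vertex outside of $F$) lies in $H+t$. Hence (i) and the definition are the same condition, once rephrased in terms of the supporting hyperplane.

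Next I would handle (ii). Fix an irredundant description $\{a_i^t x\leq b_i : i\in[m]\}$ and let $H_i=\{x\in\R^d : a_i^t x = b_i\}$ be the hyperplane defining the $i$-th facet $F_i$. A hyperplane parallel to $H_i$ is precisely a hyperplane of the form $\{x : a_i^t x = c\}$ for some $c\in\R$, and this equals $H_i + t_i$ for the translation vector $t_i = \tfrac{c-b_i}{\|a_i\|^2}\, a_i$. Thus the condition in (i) applied to $H=H_i$ says exactly that every vertex $v\in V(Q)$ satisfies either $a_i^t v = b_i$ or $a_i^t v = c = b_i + m_i$ with $m_i := c-b_i$. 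Since the correspondence between translations $t_i$ orthogonal to $H_i$ and scalars $m_i$ is a bijection, (i) and (ii) are equivalent facet-by-facet.

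The argument is essentially a translation of language between (a) affine-hull-of-facet formulation, (b) hyperplane-of-facet formulation, and (c) linear-functional formulation, so no step is a real obstacle. The only mild subtlety to make explicit is that irredundancy in (ii) is what guarantees that each inequality $a_i^t x\leq b_i$ genuinely corresponds to a facet and hence that checking the 2-level condition over all $i\in[m]$ is the same as checking it over all facets; without irredundancy, a redundant inequality could impose an unjustified parallel-hyperplane condition or be omitted, breaking the equivalence.
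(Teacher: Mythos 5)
Your argument is correct. For the record, the paper itself states Proposition \ref{aquivalent} without any proof at all, so there is no authorial argument to compare against; your write-up is the standard ``translation of language'' one would expect, and it fills that gap. Two points worth making explicit if this were to be inserted: first, your identification $\aff(F)=H$ and the bijection between translates $H+t$ and level sets $\{x: a_i^tx=b_i+m_i\}$ both rely on $Q$ being full-dimensional in $\R^d$ (otherwise $\aff(F)$ is a proper affine subspace of any supporting hyperplane $H$, and one must pass through $\aff(Q)$ to recover the equivalence -- the statement is still true, but the one-line identification is not); second, you prove the two conditions are \emph{equivalent} to Definition \ref{Def2level}, which is stronger than the stated ``if'' but is in fact what the paper uses later (e.g.\ in the proofs of Theorems \ref{markorder2l} and \ref{markchain2l}, where 2-levelness is used to \emph{deduce} that every vertex satisfies one of two parallel equations), so proving the equivalence rather than just the sufficiency is the right call. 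Your remark on the role of irredundancy in (ii) is also correct and worth keeping: without it, a redundant inequality need not be facet-defining and the facet-by-facet correspondence with (i) breaks down.
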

	
	\begin{defi}
	We call a map $f:\R^d\rightarrow \R^d$ an \textit{affine transformation}, if $f$ is of the form $x \mapsto Ax+x_0$, where $A$ is a nonsingular square matrix and $x_0 \in \R^d$.\\
	For a set $F\subseteq\R^d$ we call $f(F)$ the \textit{affine image} of $F$.\\
	Let $Q,Q'\subseteq \R^d$ be polytopes. We call $Q$ and $Q'$ \textit{affinely isomorphic}, if there exists an affine transformation $f$, such that $f(Q)=Q'$.
	\end{defi}
	
	
	\begin{lemm}
	Let $Q\subseteq \R^d$ be a 2-level polytope. Then for every affine transformation $f$, the polytope $f(Q)$ is still a 2-level polytope.
	\end{lemm}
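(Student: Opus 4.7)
The plan is to transport the 2-level structure of $Q$ through the affine transformation $f$, using the fact that affine isomorphisms preserve the face lattice, affine hulls, and parallelism of translates.

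First I would fix an arbitrary facet $F'$ of $f(Q)$ and recover a corresponding facet of $Q$. Since $f(x)=Ax+x_0$ with $A$ nonsingular, $f$ is a bijection with affine inverse $f^{-1}(y)=A^{-1}(y-x_0)$, so $f$ restricts to a bijection between the face lattices of $Q$ and $f(Q)$. In particular, $F:=f^{-1}(F')$ is a facet of $Q$, and $f$ sends $V(Q)$ bijectively onto $V(f(Q))$ with $V(F)$ mapped onto $V(F')$.

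Next I would invoke Definition \ref{Def2level} applied to $Q$ and the facet $F$: there exists $t\in\R^d$ such that
\[
V(Q)\subseteq \aff(F)\cup \bigl(t+\aff(F)\bigr).
\]
Applying $f$ to both sides and using the identity $f(t+y)=A(t+y)+x_0=At+f(y)$, we get $f(t+\aff(F))=At+f(\aff(F))=At+\aff(F')$, where I also use the standard fact that affine maps commute with taking affine hulls, so $f(\aff(F))=\aff(f(F))=\aff(F')$. Setting $t':=At\in\R^d$ therefore yields
\[
V(f(Q))=f(V(Q))\subseteq \aff(F')\cup \bigl(t'+\aff(F')\bigr),
\]
which is exactly the 2-level condition for $f(Q)$ with respect to the facet $F'$. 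Since $F'$ was arbitrary, this proves the claim.

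There is no real obstacle; the content of the lemma is just the compatibility of $f$ with each of the ingredients in Definition \ref{Def2level}, namely facets, vertices, affine hulls, and translation. The only small point worth emphasising is the step $f(\aff(F))=\aff(f(F))$, which uses the nonsingularity of $A$ so that $f$ is an affine isomorphism (rather than merely an affine map), and ensures that $f(F)$ is again a facet rather than a lower-dimensional face.
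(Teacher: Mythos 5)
Your proof is correct and is simply a fully written-out version of the paper's one-line argument (the paper dismisses the lemma as trivial because "parallel objects stay parallel under affine transformations"). The details you supply — the facet correspondence under the affine isomorphism, $f(\aff(F))=\aff(f(F))$, and $f(t+\aff(F))=At+\aff(F')$ — are exactly the ingredients that justification implicitly relies on.
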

	\begin{proof}
	This is trivial, since parallel objects stay parallel after affine transformations.
	\end{proof}
	
Before defining the notations of marked posets and marked poset polytopes, we will remind ourselves of the definitions of order polytopes and chain polytopes.

\begin{defi}\textnormal{\cite[Def 1.1]{stanley1986two}}
The \textit{order} polytope $\OP(P)$ of the poset $P$ is the subset of $\R^P$ defined by the conditions
\begin{align*}
	(1)&~0 \leq x_p \leq 1, \text{ for all }p\in P \text{ and }\\
	(2)&~x_p \leq x_q, \text{ for the } p,q\in P \text{ with }p\preceq q.
\end{align*}
\end{defi}

\begin{defi}\textnormal{\cite[Def 2.1]{stanley1986two}}
The \textit{chain} polytope $\CP(P)$ of the poset $P$ is the subset of $\R^P$ defined by the conditions
\begin{align*}
	(1)&~0 \leq x_c, \text{ for all }c\in P \text{ and }\\
	(2)&~x_{c_1} + \cdots + x_{c_k} \leq 1, \text{ for all (maximal) chains } c_1 \prec \cdots \prec c_k \text{ in } P. 
\end{align*}
\end{defi}
	
We will define next marked posets. This definition let us directly define marked order, marked chain and marked chain-order polytopes.	
	
	\begin{defi}\textnormal{\cite[Def 2.1]{pegel2018face}}
A \textit{marked poset} $(P, \lambda)$ is a finite poset $P$ together with an induced subposet $P^*\subseteq P$ of \textit{marked elements} and an order-preserving \textit{marking} $\lambda : P^* \rightarrow \R$.\\
The marking $\lambda$ is called \textit{strict} if $\lambda(a) < \lambda(b)$ whenever $a \prec b$. A map $f : (P, \lambda) \rightarrow (P', \lambda')$ between marked posets is an order-preserving map $f : P \rightarrow P'$ such that
$f(P^*) \subseteq (P')^*$ and $\lambda'(f(a)) = \lambda(a)$ for all $a \in P^*$.
\end{defi}

\begin{defi}\textnormal{\cite[Def 3.19]{pegel2018face}}
A marked poset $(P, \lambda)$ is called \textit{regular} if for each
covering relation $p \prec q$ in $P$ and $a, b \in P^*$ such that $a \preceq q$ and $p \preceq b$, we have $a = b$ or $\lambda(a) < \lambda(b)$.
\end{defi}

From this property, we can derive useful insights for the marked poset.

\begin{prop}\textnormal{\cite[Prop 3.20]{pegel2018face}}
	Let $(P, \lambda)$ be a regular marked poset. For $(P, \lambda)$ all following statements are true.
	\begin{enumerate}[label=(\arabic*)]
		\item The marking $\lambda$ is strict.
		\item There exists no cover relations between marked elements.
		\item Every element in $P$ covers at most one marked element and is at most covered by one marked element.
	\end{enumerate}
\end{prop}

	\begin{defi}\textnormal{\cite[Def 3.3]{ardila2011gelfand}}
	Let $(P, \lambda)$ be a marked poset with $\min(P)\cup\max(P)\subseteq P^*$. The \textit{marked order polytope} $\OP(P, \lambda)$ associated to $(P, \lambda)$ is the set of all $x \in \R^{P\backslash P^*}$ satisfying the following conditions:
	\begin{align*}
		(1)&~\lambda(a) \leq x_p \leq \lambda(b), \text{ for all }p\in P\backslash P^* \text{ and } a,b \in P^* \text{ with }a \prec p \prec b \text{ and }\\
		(2)&~x_p \leq x_q, \text{ for the } p,q\in P\backslash P^* \text{ with }p\preceq q.
	\end{align*}
\end{defi}

\begin{defi}\textnormal{\cite[Def 3.3]{ardila2011gelfand}}\label{defmarkchain}
	Let $(P, \lambda)$ be a marked poset with $\min(P)\cup\max(P)\subseteq P^*$. The \textit{marked chain polytope} $\CP(P, \lambda)$ associated to $(P, \lambda)$ is the set of all $x \in \R_{\geq 0}^{P\backslash P^*}$ satisfying the following condition:
	\begin{align*}
		(1)&~x_{p_1} + x_{p_2} + \ldots + x_{p_k} \leq \lambda(b)-\lambda(a), \text{ for each (maximal) chain } \\
		&~a\prec p_1\prec p_2\prec\ldots\prec p_k\prec b \text{ in } P \text{ with } a,b \in P^*.
	\end{align*}
\end{defi}

\newpage

\begin{defi}\textnormal{\cite[Def 1.2]{fang2018minkowski}}
Let $P = P^* \cup C \cup O$ be a partition of a poset $P$ with $\min(P)\cup\max(P)\subseteq P^*$ and $\lambda$ a marking. The elements of $C$ and $O$ are called \textit{chain elements} and \textit{order elements}, respectively. The \textit{marked chain-order polytope} $\OP_{C,O}(P, \lambda) \subseteq \R^P$ is the
set of all $x = (x_p)_{p\in P} \in R^P$ satisfying the following conditions:\begin{align*}
(1)&\text{ for any } a \in P^*,~ x_a = \lambda(a);\\
(2)&\text{ for } p \in C, x_p \geq 0;\\
(3)&\text{ for each saturated chain } a \prec p_1 \prec \cdots \prec p_r \prec b \text{ with } a, b \in P^* \cup O, p_i \in C,r \geq 0,\\
&\text{ we have } x_{p_1} + \cdots+ x_{p_r} \leq x_b - x_a.
\end{align*}
\end{defi}

From here on we will always use $P^*$ for the set of marked elements for a poset $P$. We also assume $\min(P)\cup\max(P)\subseteq P^*$ for each marked poset.

\begin{bem}
When a partition $P = P^* \cup C \cup O$ is given, we write the points of $\R^P$ as $x =(\lambda, x_C , x_O)$ with $\lambda \in R^{P^*}$, $x_C \in R^C$ and $x_O \in R^O$. Since the coordinates in $P^*$ are fixed for the points of $\OP_{C,O}(P, \lambda)$, we usually consider the projection of $\OP_{C,O}(P, \lambda)$ in $\R^{P\backslash P^*}$ instead, keeping the same notation to write $(x_C , x_O) \in \OP_{C,O}(P, \lambda)$ instead of $(\lambda, x_C , x_O) \in \OP_{C,O}(P,\lambda)$.
\end{bem}

Especially in Section \ref{section3}, where we will be looking at marked order polytopes, we need to understand the faces of these polytopes. Thankfully Pegel already proved a lot of useful theorems \cite{pegel2018face}.

	\begin{defi}\textnormal{\cite[Def 3.1]{pegel2018face}}
Let $Q = \OP(P, \lambda)$ be a marked order polytope. To each $x \in Q$ we
associate a partition $\pi_x$ of $P$ induced by the transitive closure of the relation
\begin{align*}
p \sim_x q \text{ if } x_p = x_q
\end{align*}
and $p, q$ are comparable.
\end{defi}

\begin{defi}
Given any partition $\pi$ of $P$, we call a block $B \in \pi$ \textit{free} if $P^* \cap B = \emptyset$ and denote
by $\tilde{\pi}$ the set of all free blocks of $\pi$.
\end{defi}

	\begin{prop}\textnormal{\cite[Prop 3.2]{pegel2018face}}\label{orderpart}
Let $x \in Q = \OP(P, \lambda)$ be a point of a marked order polytope with associated partition $\pi = \pi_x$. We define the set
$$F_x := \{ y \in Q : y \text{ is constant on the blocks of } \pi\}.$$
Then it holds that $\dim(\aff(F_x)) = |\tilde{\pi}|$.
\end{prop}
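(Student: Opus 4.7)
The plan is to parametrise $F_x$ by block values. Every $y \in F_x$ is constant on each block $B \in \pi$, so we may write $y_B$ for its common value on $B$ (with the convention that $y_a = \lambda(a)$ on marked coordinates). If a non-free block $B$ contains two marked elements $a,b$, they are joined by a chain of comparable pairs of equal $x$-value, so $\lambda(a) = x_a = x_b = \lambda(b)$ and the ``fixed'' value on $B$ is unambiguous. The free block values therefore parametrise $F_x$ injectively via an affine map $\Phi : F_x \to \R^{\tilde{\pi}}$, $y \mapsto (y_B)_{B \in \tilde{\pi}}$.

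Next I would describe $\Phi(F_x)$ explicitly. On $\pi$ introduce the relation $B \preceq_\pi C$ iff there exist $p \in B$, $q \in C$ with $p \preceq q$. This is reflexive, transitive, and antisymmetric: if $B \neq C$ with $B \preceq_\pi C \preceq_\pi B$, witnessed by $p \preceq q$ and $q' \preceq p'$ where $p, p' \in B$ and $q, q' \in C$, then $x_B = x_p \leq x_q = x_C \leq x_{p'} = x_B$, forcing $x_p = x_q$; but $p \preceq q$ with $x_p = x_q$ means $p \sim_x q$, merging $B$ and $C$, a contradiction. Hence $(\pi, \preceq_\pi)$ is a poset, and $\Phi(F_x)$ is exactly the slice of the affine subspace fixing the non-free block values by the inequalities $y_B \leq y_C$ for all $B \preceq_\pi C$.

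For the dimension, I would argue that $\Phi(x)$ is a relatively interior point of $\Phi(F_x)$. If $B \neq C$ and $B \preceq_\pi C$, witnessed by $p \in B$, $q \in C$ with $p \preceq q$, then $x_p \leq x_q$ because $x \in \OP(P,\lambda)$, and equality is impossible, since $x_p = x_q$ for comparable $p,q$ would merge $B$ with $C$. Thus every nontrivial block-inequality is strict at $\Phi(x)$, so a small open ball around $\Phi(x)$ inside the affine subspace of fixed non-free coordinates still satisfies them, and hence lies in $\Phi(F_x)$. Therefore $\Phi(F_x)$ has affine dimension $|\tilde{\pi}|$, and since $\Phi$ is an affine bijection onto its image we conclude $\dim(\aff(F_x)) = |\tilde{\pi}|$.

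The main obstacle is the interaction with the transitive closure in the definition of $\pi_x$: one must verify both that $\preceq_\pi$ is genuinely a partial order on blocks and that distinct comparable blocks separate strictly at $x$. Once these two properties are established the counting argument is immediate, because the only coordinates of $\Phi$ that are still free to move are indexed precisely by $\tilde{\pi}$, and no additional constraints (such as nonnegativity) are present in $\OP(P,\lambda)$ beyond the poset inequalities already absorbed into the block-poset structure.
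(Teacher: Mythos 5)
The paper itself gives no proof of this proposition---it is quoted directly from Pegel's work---but your argument is correct and is essentially the standard one: project $F_x$ onto one coordinate per free block, note that every order constraint joining two distinct blocks is strict at $x$ (equality would merge the blocks under $\sim_x$), and conclude that $\Phi(x)$ is a relative interior point of a full-dimensional subset of $\R^{\tilde{\pi}}$, while injectivity of $\Phi$ gives the reverse inequality. One small inaccuracy: the raw relation $B \preceq_\pi C$ (``some $p \in B$ lies below some $q \in C$'') need not be transitive, since a block may contain mutually incomparable elements; only its transitive closure is a partial order, which is why Pegel's definition takes the closure. This does not affect your dimension count, which only ever invokes a single witnessed comparable pair per defining inequality of $\OP(P,\lambda)$, so the proof stands as written.
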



\begin{defi}\textnormal{\cite[Def 3.7]{pegel2018face}}
Let $(P, \lambda)$ be a marked poset. A partition $\pi$ of $P$ is \textit{connected} if the
blocks of $\pi$ are connected as induced subposets of $P$. It is $P$\textit{-compatible}, if the relation
$\preceq$ defined on $\pi$ as the transitive closure of
\begin{align*}
B \preceq C \text{ if } p \preceq q \text{ for some } p \in B, q \in C
\end{align*}
is anti-symmetric. In this case $\preceq$ is a partial order on $\pi$. A $P$-compatible partition $\pi$
is called $(P, \lambda)$\textit{-compatible}, if whenever $a \in B \cap P^*$ and $b \in C \cap P^*$ for some blocks
$B \preceq C$, we have $\lambda(a) \leq \lambda(b)$.
\end{defi}

\begin{theorem}\textnormal{\cite[Thm 3.14]{pegel2018face}}\label{orderface}
A partition $\pi$ of a marked poset $(P, \lambda)$ is a face partition if and only if it
is $(P, \lambda)$-compatible, connected and the induced marking on $(P\slash \pi, \lambda\slash \pi)$ is strict.
\end{theorem}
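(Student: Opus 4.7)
The plan is to prove both directions by carefully analyzing how the equivalence relation $\sim_x$ interacts with the polytope defining inequalities and with the prescribed values on $P^*$.

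First I would establish necessity. Fix any $x \in Q = \OP(P,\lambda)$ and let $\pi = \pi_x$. Connectedness of each block is immediate from the definition: the relation $\sim_x$ is only declared between comparable elements, so its transitive closure inside a block witnesses that block as connected in its induced Hasse diagram. For $(P,\lambda)$-compatibility, the marking condition is direct: whenever $a \in B \cap P^*$ and $b \in C \cap P^*$ with $B \preceq C$, iterating the polytope inequalities along a chain realizing $B \preceq C$ yields $\lambda(a) = x_a \leq x_b = \lambda(b)$. The subtler point is anti-symmetry of $\preceq$ on blocks: a cycle $B \preceq C \preceq B$ with $B \neq C$ would force a chain of comparisons pinning all intermediate $x$-values to a common value, and since the chain consists of comparable pairs, the transitive closure of $\sim_x$ would merge $B$ and $C$, a contradiction. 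Strictness of the induced marking on $(P/\pi, \lambda/\pi)$ follows by the same mechanism: if $[a] \prec [b]$ strictly with $a, b \in P^*$ but $\lambda(a) = \lambda(b)$, every $x$-coordinate along the chain realizing $[a] \preceq [b]$ is trapped between these equal values, forcing equality throughout and merging the two blocks.

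Then I would prove sufficiency by explicit construction. Given a $(P,\lambda)$-compatible, connected partition $\pi$ with strict induced marking, the relation $\preceq$ is a genuine partial order on the quotient $P/\pi$. Choose a linear extension of this quotient order and process the free blocks $B \in \tilde{\pi}$ in this order, assigning to each a real value strictly between the maximum value already placed on a block $\prec B$ and the minimum value on a block $\succ B$ (whether marked or previously assigned). The strict induced marking guarantees that the bounding marked blocks leave enough room, and fresh free blocks can always be squeezed in between. Define $x$ to be constant on each block with the chosen value. By monotonicity of the assignment with respect to $\preceq$, the defining inequalities $x_p \leq x_q$ for $p \preceq q$ are satisfied, and $x_a = \lambda(a)$ for $a \in P^*$ holds by construction, so $x \in Q$.

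It remains to verify $\pi_x = \pi$. Connectedness of each block of $\pi$ ensures that any two elements in a common block are linked by a chain of pairwise comparable elements sharing the block's value; hence they are $\sim_x$-equivalent, so $\pi_x$ is refined by $\pi$. Conversely, strict monotonicity of the value assignment across distinct blocks of $\pi$ guarantees that two comparable elements from different blocks receive different values, so no additional $\sim_x$-identifications appear. Thus $\pi_x = \pi$. The main obstacle will be the inductive value assignment in the sufficiency direction: one has to argue that at every step there is genuine room for the new free block, which relies essentially on the strict marking hypothesis to avoid tight marked-block constraints, and on the anti-symmetry of $\preceq$ to make the linear extension well-defined.
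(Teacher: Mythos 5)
This theorem is quoted from Pegel \cite[Thm 3.14]{pegel2018face} and is not proved anywhere in the present paper, so there is no in-paper argument to compare against; your proposal has to be judged on its own and against the source. On that basis it is essentially correct and follows the same route as Pegel's original proof: necessity by reading off connectedness, compatibility, anti-symmetry and strictness from the constancy of $x$ on the blocks of $\pi_x$ together with the defining inequalities $x_p \leq x_q$, and sufficiency by building a point $x$ that is constant on blocks and strictly increasing across distinct comparable blocks, then checking $\pi_x = \pi$. The one place where your write-up is looser than it should be is the inductive assignment: if you process free blocks along a linear extension of $(P/\pi, \preceq)$, then at the moment you treat a free block $B$ the free blocks above $B$ have not yet received values, so the upper constraint can only be taken over the \emph{marked} blocks above $B$ (which exist since $\max(P) \subseteq P^*$), and the lower constraint over all already-assigned blocks below $B$ together with the marked ones. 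The nonemptiness of the resulting open interval is exactly where strictness of the induced marking, anti-symmetry of $\preceq$, and $\min(P)\cup\max(P)\subseteq P^*$ are used; you correctly identify this as the crux but should spell out that every previously assigned value below $B$ was itself chosen below the minimum of the marked values above it, hence below the minimum of the marked values above $B$. With that detail filled in, the argument is complete and matches the cited proof.
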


\begin{prop}\textnormal{\cite[Prop 4.2]{pegel2018face}}\label{productposet}
Let $(P_1, \lambda_1)$ and $(P_2, \lambda_2)$ be marked posets on disjoint sets. Let the marking $\lambda_1 \sqcup \lambda_2 : P^*_1 \sqcup P^*_2 \rightarrow \R$ on $P_1 \sqcup P_2$ be given by $\lambda_1$ on $P^*_1$ and $\lambda_2$ on $P^*_2$. The marked order polytope $\OP(P_1 \sqcup P_2, \lambda_1 \sqcup \lambda_2)$ is equal to the product $\OP(P_1, \lambda_1) \times \OP(P_2, \lambda_2)$ under the canonical identification $\R^{P_1\sqcup P_2} = \R^{P_1} \times \R^{P_2}$.
\end{prop}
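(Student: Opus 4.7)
The plan is a direct unpacking of the definitions, exploiting the fact that the partial order on $P_1 \sqcup P_2$ has no comparisons between the two components. I would first observe that by the definition of disjoint union of posets, a covering relation $p \prec q$ in $P_1 \sqcup P_2$ implies that either $p, q \in P_1$ (with $p \prec_1 q$) or $p, q \in P_2$ (with $p \prec_2 q$). Consequently the entire set of defining constraints of the marked order polytope decomposes into two disjoint groups: those using only coordinates indexed by $P_1$ and those using only coordinates indexed by $P_2$.

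Next I would use the canonical identification $\R^{P_1 \sqcup P_2} = \R^{P_1} \times \R^{P_2}$ to write any $x$ as $(x^{(1)}, x^{(2)})$. Under this identification, $(P_1 \sqcup P_2)^* = P_1^* \sqcup P_2^*$, and the marking condition $x_a = (\lambda_1 \sqcup \lambda_2)(a)$ for $a \in (P_1 \sqcup P_2)^*$ splits into $x^{(1)}_a = \lambda_1(a)$ for $a \in P_1^*$ and $x^{(2)}_a = \lambda_2(a)$ for $a \in P_2^*$. Similarly, the inequalities $x_p \leq x_q$ for $p \preceq q$ split into the corresponding inequalities on $x^{(1)}$ and on $x^{(2)}$. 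Thus $(x^{(1)}, x^{(2)})$ lies in $\OP(P_1 \sqcup P_2, \lambda_1 \sqcup \lambda_2)$ if and only if $x^{(1)} \in \OP(P_1, \lambda_1)$ and $x^{(2)} \in \OP(P_2, \lambda_2)$, which is exactly the statement that the polytope equals the product $\OP(P_1, \lambda_1) \times \OP(P_2, \lambda_2)$.

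There is essentially no substantive obstacle to overcome here; the statement is a routine verification once one has noted that the defining inequalities of a marked order polytope are indexed by comparable pairs, and that no such pair bridges the two disjoint components. The only thing to be careful about is matching up index sets correctly, in particular confirming that $(P_1 \sqcup P_2) \setminus (P_1 \sqcup P_2)^*$ canonically decomposes as $(P_1 \setminus P_1^*) \sqcup (P_2 \setminus P_2^*)$, so that the ambient spaces of the polytopes on both sides of the claimed equality really coincide under the identification.
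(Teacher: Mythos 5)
Your argument is correct: since no comparability in $P_1 \sqcup P_2$ bridges the two components, every defining constraint of the marked order polytope involves coordinates from only one component, and the polytope splits as the claimed product. The paper itself gives no proof, citing the result from Pegel, and your direct verification is exactly the standard argument used there; your closing remark about identifying $(P_1 \sqcup P_2)\setminus(P_1^*\sqcup P_2^*)$ with $(P_1\setminus P_1^*)\sqcup(P_2\setminus P_2^*)$ is the right detail to check, since the polytopes live in $\R^{P\setminus P^*}$ rather than $\R^{P}$.
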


	\section{2-level Marked Order Polytopes}\label{section3}
	
Before characterising the 2-level marked order polytopes, it is useful to make some observations.\\
Firstly, we only want to work with connected posets, therefore we prove that we can separate these components, while working with 2-levelness.	
	
	\begin{lemm}\label{conmaxmin}
	Let $(P,\lambda)$ be a marked poset, for which the Hasse-diagram consists of connected, disjoint components, where each component only has one maximal and one minimal marked element. Then $\OP(P, \lambda)$ is affinely isomorphic to an order polytope.
	\end{lemm}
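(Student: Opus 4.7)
The plan is to first reduce to a single connected component via Proposition \ref{productposet} and then write down an explicit affine scaling that identifies the resulting marked order polytope with an (unmarked) order polytope.

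For the reduction, the hypothesis that the Hasse diagram splits into disjoint components $P = P_1 \sqcup \cdots \sqcup P_k$ together with Proposition \ref{productposet} gives a factorisation $\OP(P,\lambda) = \prod_{i=1}^k \OP(P_i, \lambda|_{P_i^*})$. A product of affine transformations is affine, and a product of order polytopes $\OP(Q_1) \times \cdots \times \OP(Q_k)$ is itself the order polytope of the disjoint union $\OP(Q_1 \sqcup \cdots \sqcup Q_k)$ under the canonical identification of the ambient spaces. So it suffices to treat a single connected component, and I would assume henceforth that $P$ is connected with exactly two marked elements, its minimum $a$ and its maximum $b$.

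For the main step, in the non-degenerate case $\lambda(a) < \lambda(b)$, I would define the affine transformation
\[
f \colon \R^{P \setminus P^*} \longrightarrow \R^{P \setminus P^*}, \qquad f(x)_p = \frac{x_p - \lambda(a)}{\lambda(b) - \lambda(a)},
\]
whose linear part is a nonzero scalar multiple of the identity and hence invertible. The task is then to verify $f(\OP(P,\lambda)) = \OP(P \setminus P^*)$, where $P \setminus P^*$ carries the induced subposet structure. Under the hypothesis $P^* = \{a,b\}$, every defining inequality of $\OP(P,\lambda)$ takes one of three forms: an interior comparison $x_p \leq x_q$ for $p,q \in P \setminus P^*$ with $p \prec q$, a lower bound $\lambda(a) \leq x_p$ coming from $a \prec p$, or an upper bound $x_p \leq \lambda(b)$ coming from $p \prec b$. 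These transform under $f$ into $y_p \leq y_q$, $0 \leq y_p$, and $y_p \leq 1$ respectively, which are exactly the defining inequalities of the order polytope $\OP(P \setminus P^*)$. In the degenerate case $\lambda(a) = \lambda(b)$, every coordinate is squeezed to the common value and $\OP(P,\lambda)$ collapses to a single point, which is trivially an order polytope.

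The verification itself is routine; the main point to be careful about is that the hypothesis ``only one maximal and one minimal marked element per component'' is precisely what rules out extra defining inequalities of the form $x_p \leq \lambda(m)$ or $\lambda(m) \leq x_p$ coming from an interior marked element $m$. Without this hypothesis, the scalings needed for different unmarked coordinates become incompatible and the polytope can easily fail to be affinely isomorphic to any order polytope (one can already produce a genuine planar trapezoid this way), so it is really the restrictive assumption on $P^*$, rather than any deeper combinatorial input, that drives the lemma.
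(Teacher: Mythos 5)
Your proof is correct and follows essentially the same route as the paper: decompose into connected components via Proposition \ref{productposet}, affinely rescale each component so that its two markings become $0$ and $1$, and recognise the result as an order polytope (the paper glues the rescaled components with a common $\hat{0}$ and $\hat{1}$ and cites Ardila--Bliem--Salazar for the final identification, whereas you verify the defining inequalities directly and use that a product of order polytopes is the order polytope of the disjoint union, which is a slightly more self-contained version of the same argument). The only caveat is your degenerate case $\lambda(a)=\lambda(b)$: a single point in $\R^{P\setminus P^*}$ is not affinely isomorphic to an order polytope under the paper's dimension-preserving notion of affine isomorphism, but this case is excluded wherever the lemma is applied (strictness is assumed there), and the paper's own proof silently makes the same assumption when it rescales the markings to $0$ and $1$.
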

	\begin{proof}
	Let $P_1,\ldots,P_k$ be the connected, disjoint components of $(P,\lambda)$ with $\lambda_i=\lambda\mid_{P_i}$. Since each $P^*_i$ only contains one minimal and one maximal element, they are affinely isomorphic to an order polytope, by changing the marking of these elements to $0$ and $1$. This change is an affine transformation without changing the structure of the polytope.\\
	From Proposition \ref{productposet} we know, that 
$$
\OP(P_1 \sqcup \cdots \sqcup P_k, \lambda_1 \sqcup \cdots \sqcup \lambda_k) = \OP(P_1,\lambda_1) \times \cdots \times \OP(P_k,\lambda_k).$$
Since the affine transformations only affect each marked order polytope separately, we can combine them to affinely transform  $\OP(P_1 \sqcup \cdots \sqcup P_k, \lambda_1 \sqcup \cdots \sqcup \lambda_k)$ such that all markings are only $0$ and $1$. We call this new marked order polytope $\mathcal{P}$ with poset $P'$ and marking $\lambda'$. Since all the components of $P'$ are disjoint and have a maximal marking of $1$ and a minimal marking of $0$, we can connect them, by deleting the individual marked elements and connect all elements of $P'$ to one maximal element $\hat{1}$ and one minimal element $\hat{0}$. This does not change the marked order polytope $\mathcal{P}$. From \cite[Section 3.2]{ardila2011gelfand} we know that $\mathcal{P}$ is now an order polytope and by construction affine isomorphic to $\OP(P,\lambda)$.
	\end{proof}
	
We use the theorems, which describe faces of marked order polytopes to define the exact facets of marked order polytopes.
	
		\begin{lemm}\label{markedfacet}
	Let $(P,\lambda)$ be a regular marked poset. Then the facet defining equalities of $\OP(P, \lambda)$ are of the form
	\begin{align*}
	&x_p = x_q \text{ for } p\prec q \text{ in } P &\text{ and} \\
	&x_p = \lambda(a) \text{ for } p \prec a \text{ or } a\prec p \text{ with } p \in P \text{ and } a \in P^*.&
	\end{align*}
	\end{lemm}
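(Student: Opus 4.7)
The plan combines Pegel's face characterization (Theorem \ref{orderface}) with the dimension formula (Proposition \ref{orderpart}). The first step is to pin down the ambient dimension of $\OP(P,\lambda)$ by applying Proposition \ref{orderpart} to the discrete partition $\pi_0$ in which every element of $P$ is a singleton. Checking the three conditions of Theorem \ref{orderface} for $\pi_0$ is immediate: singletons are trivially connected, the induced order on $P/\pi_0$ coincides with $\preceq$ on $P$ so it is antisymmetric and $(P,\lambda)$-compatible, and the induced marking is $\lambda$ itself, which is strict by hypothesis. Hence $\dim \OP(P,\lambda) = |\tilde{\pi_0}| = |P \setminus P^*| =: n$, so the polytope is full-dimensional in $\R^{P\setminus P^*}$.

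Since facets are the codimension-one faces, Proposition \ref{orderpart} reduces the task to enumerating face partitions $\pi$ with $|\tilde\pi|=n-1$. A direct count of how the $n$ free elements are distributed among blocks shows that exactly one of two patterns must occur: either (A) a single free block contains two free elements $p,q$ while every other free element is a singleton, or (B) a single non-free block contains exactly one free element $p$ together with one or more marked elements, while all other free elements are singletons. The equality holding on the corresponding face is $x_p=x_q$ in case (A) and $x_p=\lambda(a)$ (for any marked $a$ lying in the merged block) in case (B), matching precisely the two forms of the lemma.

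Finally I will verify that the partitions arising in (A) and (B) really do satisfy the conditions of Theorem \ref{orderface}. Connectedness of the merged block forces $p$ and $q$ (respectively $p$ and $a$) to be comparable in $P$, yielding the claimed relation $p\prec q$, $p\prec a$ or $a\prec p$; and $P$-compatibility forces this relation to be a covering relation, since any strictly intermediate element would produce a cycle $B\preceq\{r\}\preceq B$ in the induced order on $P/\pi$. The delicate point, where the regularity hypothesis enters, is strictness of the induced marking: whenever a marked $a'$ lies below the merged block and a marked $b'$ lies above it, Theorem \ref{orderface} requires $\lambda(a')<\lambda(b')$, which is exactly the content of regularity applied at the covering relation inside the merged block (the alternative $a'=b'$ is ruled out because, in case (A), a marked element squeezed between the members of a covering pair of non-marked elements is impossible). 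The main obstacle I anticipate is the bookkeeping in case (B) when several marked elements share the merged block: strict marking forces them to be pairwise incomparable with the same $\lambda$-value, so the facet equation $x_p=\lambda(a)$ is well-defined and independent of the chosen $a$, and every facet is still captured by the two forms listed in the lemma.
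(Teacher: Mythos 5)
Your proposal is correct and follows essentially the same route as the paper: both reduce the problem via Proposition \ref{orderpart} and Theorem \ref{orderface} to enumerating face partitions with $|P\setminus P^*|-1$ free blocks, and both identify the same two merging patterns (two free elements joined by a covering, or a free element absorbed into a marked block). Your version is in fact more careful than the paper's, which leaves the full-dimensionality of $\OP(P,\lambda)$ and the role of regularity implicit.
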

	\begin{proof}
	Let $(P,\lambda)$ be a regular marked poset and $|P\backslash P^*|=d$. We know from Proposition \ref{orderpart} and Theorem \ref{orderface}, that facets of $\OP(P, \lambda)$ are defined by partitions of $P$ with $d-1$ free blocks.\\
	Since $|P\backslash P^*|=d$ there are only two options for how we can achieve $d-1$ free blocks. The first one is that $x_p = x_q \text{ for a a covering } p\prec q \text{ in } P$, because then $q$ and $p$ are in the same block and exactly two elements of $P\backslash P^*$ can only be in the same block, by being directly connect by a covering. The second option is that $x_p = \lambda(a) \text{ for some } p \in P \text{ and some } a \in P^*$ for obvious reasons.
	\end{proof}
	
	With these two lemmas we can now characterise marked order polytopes, which are 2-level polytopes.
	
	\begin{theorem}\label{markorder2l}
	Let $(P,\lambda)$ be a regular marked poset.
	The following conditions are equivalent:
	\begin{enumerate}[label=(\alph*)]
	\item $\OP(P,\lambda)$ is a 2-level polytope.
	\item Each connected component of the poset $P$ has one unique maximal and one unique minimal marked element.
	\item $\OP(P,\lambda)$ is affinely isomorphic to an order polytope.
	\end{enumerate}
	\end{theorem}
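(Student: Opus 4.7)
The plan is to establish the equivalence cyclically via $(b)\Rightarrow(c)\Rightarrow(a)\Rightarrow(b)$, where the first two implications are short and the third carries the substance.

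The implication $(b)\Rightarrow(c)$ is exactly Lemma \ref{conmaxmin}, so it requires nothing further. For $(c)\Rightarrow(a)$, I would combine the classical result that ordinary order polytopes are 2-level (cited in the introduction from \cite{ohsugi2001compressed}) with the lemma immediately after the definition of affine transformations, which says that the 2-level property is preserved under affine isomorphism. Composing these two facts yields $(c)\Rightarrow(a)$ with no additional computation.

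The main work lies in $(a)\Rightarrow(b)$, and I would prove the contrapositive. Assume some connected component $P_0$ of $P$ contains two distinct maximal marked elements $a_1,a_2\in P^*\cap\max(P_0)$; the case of two distinct minimal marked elements is symmetric, handled by passing to the opposite poset. Since $P_0$ is connected and $a_1,a_2$ are incomparable, I can find a zig-zag of cover relations in $P_0$ joining them, descending from $a_1$ and ascending to $a_2$. Irredundancy of $(P,\lambda)$ ensures that I can select an unmarked element $p\in P\setminus P^*$ covered by $a_1$ for which, by Lemma \ref{markedfacet}, the hyperplane $x_p=\lambda(a_1)$ defines an honest facet $F$ of $\OP(P,\lambda)$.

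With $F$ in hand, the goal is to exhibit two vertices of $\OP(P,\lambda)$ that both lie outside $F$ but whose $x_p$-coordinates are distinct; this directly contradicts Proposition \ref{aquivalent}(ii). Since the vertices of a marked order polytope are the order-preserving maps that take only marked values on each coordinate, I would construct the first witness by propagating $\lambda(a_2)$ (or a suitable value reachable from $a_2$ through the zig-zag) downward to $p$, giving a vertex with $x_p$ equal to some marked value strictly smaller than $\lambda(a_1)$; and a second witness by propagating the global minimum marked value along the component, giving a vertex with $x_p$ equal to a different marked value. The zig-zag is what allows these two distinct non-$\lambda(a_1)$ values to be realized simultaneously with the remaining order constraints.

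The main obstacle I anticipate is a careful case analysis to guarantee that such a $p$ and such witnesses always exist: the construction depends on whether $\lambda(a_1)=\lambda(a_2)$ or not, on the exact shape of the zig-zag connecting $a_1$ and $a_2$, and on how many of the elements covered by $a_1$ are themselves marked. In each configuration I expect irredundancy together with strictness to provide the necessary flexibility, but verifying this uniformly — particularly when many of the neighbors of $a_1$ lie in $P^*$ — is the delicate step that will occupy most of the proof.
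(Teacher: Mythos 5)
Your treatment of $(b)\Rightarrow(c)$ and $(c)\Rightarrow(a)$ coincides with the paper's (Lemma \ref{conmaxmin}, plus compressedness of order polytopes and affine invariance of 2-levelness) and is fine. The gap is in $(a)\Rightarrow(b)$, and it is not merely a bookkeeping issue. Your plan hinges on the facet $x_p=\lambda(a_1)$ for some unmarked $p$ covered by $a_1$, but that facet need not witness the failure of 2-levelness. Take $P=\{b,p,q,a_1,a_2\}$ with $b\prec p\prec q$, $p\prec a_1$, $q\prec a_2$ and $\lambda(b)=0$, $\lambda(a_1)=1$, $\lambda(a_2)=2$: this is strict and regular, the component has two maximal marked elements, and the polytope $\{0\le x_p\le 1,\ x_p\le x_q\le 2\}$ with vertices $(0,0),(0,2),(1,1),(1,2)$ is not 2-level. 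Yet the facet $x_p=\lambda(a_1)$ produced by your recipe is perfectly 2-level-compliant, since both vertices outside it lie on $x_p=0$; the violation is only visible at the facet $x_q=\lambda(a_2)$ and at the facet $x_p=x_q$. Facets of the form $x_p=x_q$ never enter your plan, whereas they are exactly Case~2 of the paper's argument and are what allows the paper to propagate the condition of ``same highest lower and lowest upper marked bound'' along a connected component -- the role your zig-zag is meant to play. At minimum you would need to justify that an unmarked element covered by the relevant maximal marked element exists at all, specify which of $a_1,a_2$ to use, and bring the covering facets $x_p=x_q$ into the analysis.

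Beyond that, the step you explicitly defer (``verifying this uniformly \dots is the delicate step'') is the entire content of the implication. You assert the existence of two witness vertices obtained by propagating marked values down the zig-zag, but you never check that the resulting points are order-preserving (hence in $\OP(P,\lambda)$), that they are actually vertices (zero free blocks, which requires Proposition \ref{orderpart} and Theorem \ref{orderface}), or that they simultaneously avoid the facet and differ in the $x_p$-coordinate. The paper sidesteps these existence problems by arguing directly rather than by contrapositive: Proposition \ref{aquivalent}(ii) applied to each facet $x_p=\lambda(a)$ from Lemma \ref{markedfacet} pins every unmarked coordinate to exactly two marked values at all vertices, and applied to each facet $x_p=x_q$ it forces comparable unmarked elements to share those two bounds; connectivity then yields a unique maximal and minimal marked element per component. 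I recommend either adopting that two-case facet analysis or restructuring your construction so that it is driven by the $x_p=x_q$ facets rather than only by the facets at the marked maxima.
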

	\begin{proof}
	Let $(P,\lambda)$ be a regular marked poset and $d=|P\backslash P^*|$.\\
	Because of Lemma \ref{conmaxmin} we see \textit{(b)} $\Rightarrow$\textit{(c)}. Since every compressed polytope is a 2-level polytope and order polytopes are compressed, \textit{(c)} $\Rightarrow$\textit{(a)} is trivial \cite{ohsugi2001compressed}.\\
	Showing \textit{(a)} $\Rightarrow$\textit{(b)} is more complicated. Assume $\OP(P,\lambda)$ is a 2-level polytope. From Lemma \ref{markedfacet} we know, that there are only two kinds of facets. We want to get some informations about the vertices from $P$ by examining these two cases.\\
	\textit{Case 1:} Let $F$ be a facet defined by $x_p=\lambda(a)$ for some $p \in P$ and $a \in P^*$. W.l.o.g. $p$ covers $a$ in $P$.\\
	Because $\OP(P,\lambda)$ is a 2-level polytope, every vertex $v$ has to fulfill one of the following equations:
	$$
	v_p=\lambda(a) \text{ or } v_p=c \text{ for } c \in \R.
	$$
	By Proposition \ref{orderpart}, the partition for the vertex has to have zero free blocks. Therefore $c=\lambda(b)$ for some $b\in P^*$. We can conclude, that $a$ is the highest lower bound for $p$ and $b$ is the lowest higher bound for $p$.\\
	As a result, every time an element $p$ of $P\backslash P^*$ is less than an element $b$ of $P^*$ or is greater than an element $a$ of $P^*$, every vertex has $x_p=\lambda(a)$ or $x_p=\lambda(b)$.\\
	\textit{Case 2:} Let $F$ be a facet defined by $x_p=x_q$ for some $p,q \in P\backslash P^*$ and $p\prec q$.\\
	We want to show, that $p$ and $q$ have the same value for their highest lower bound marking and lowest higher bound marking. Assume $a_p$ and $a_q$ are their respective lower bounds and $b_p$ and $b_q$ their respective higher bounds.\\
	We want to see now, that $x_p$ and $x_q$ cannot have the same value $k \in \R$ in the whole facet. Assume they are equal to a value $k \in \R$ in the whole facet. Since there are vertices in the facet and all vertices have markings as coordinates, $k = \lambda(e)$ for some $e \in P^*$. Therefore the equation $x_p=x_q=\lambda(e)$ holds for the whole facet. Hence $p$, $q$ and $e$ are in the same block in the partition of the facet. This is a contradiction, since this partition has now at most $d-2$ free blocks and cannot describe a facet any more.\\
	From this we can conclude, that $\lambda(a_p)<\lambda(b_p)$ and $\lambda(a_q)<\lambda(b_q)$. We also know $\lambda(a_p)\leq \lambda(a_q) < \lambda(b_p)\leq \lambda(b_q)$, since $x_p=x_q$ for some vertices.\\
	Assume $\lambda(b_p)<\lambda(b_q)$. We get that there are non-crossing connections from $p$ and $q$ to $b_p$ and $b_q$ respectively. And $b_p$ has to be connected to a lower element then $q$ because of the ordering of $P$. In addition, we know from the 2-levelness of the polytope, all vertices have to fulfill either $x_p=x_q$ or $x_p=x_q+c$ for some $c \in \R$.\\
	We can conclude from Theorem \ref{orderface} that there exist vertices $v$ and $w$ with $v_p=\lambda(b_p)$ and $v_q=\lambda(b_q)$, $w_p=\lambda(a_p)$ and $w_q=\lambda(b_q)$. From $x_p=x_q+c$ we have to conclude, that $\lambda(a_p)=\lambda(b_p)$ which is absurd. Therefore $\lambda(b_p)=\lambda(b_q)$ and we can combine $b_p$ and $b_q$ in the poset since $p\prec q$ (except when $b_p=b_q$).
	Analogously we can conclude $\lambda(a_p)=\lambda(a_q)$ and combine $a_p$ and $a_q$ in the poset (except when $a_p=a_q$).\\
	As a result from Theorem \ref{orderface}, and the assumptions on $P$, we know, that every covering of $P$ gives us a facet. In conclusion with the two cases, we can see, that all elements of a component of the poset have exactly one maximal and one minimal element.
	\end{proof}

\section{2-level Marked Chain Polytopes}\label{section4}

We want to have a similar characterisation for marked chain polytopes. The difficulty in working with marked chain order polytopes lies in not knowing as much about the faces of marked chain polytopes. Therefore, we need some tricks.

\begin{lemm}\label{chainiso}
Let $(P,\lambda)$ be a regular marked poset. The $\CP(P,\lambda)$ is a chain polytope if all describing inequalities of the polytope are of the form
\begin{align*}
x_i \geq 0 ~~&\text{for }  i\in P\backslash P^* &\text{ and }\\ 
\sum_{i\in I} x_i \leq 1 ~~& \text{for } I \subseteq P\backslash P^*.&
\end{align*}
\end{lemm}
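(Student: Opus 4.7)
The plan is to realise $\CP(P,\lambda)$ as the chain polytope of the unmarked poset $P' := P\backslash P^*$ equipped with the order induced from $P$.

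First, I would unfold the definitions. By Definition~\ref{defmarkchain}, $\CP(P,\lambda)$ is cut out by $x_p \geq 0$ for $p \in P\backslash P^*$ together with bounds of the form $x_{p_1} + \cdots + x_{p_k} \leq \lambda(b) - \lambda(a)$, one for each maximal chain $a \prec p_1 \prec \cdots \prec p_k \prec b$ of $P$ between marked elements. The hypothesis of the lemma says that, after deleting redundancies, every surviving bound has right-hand side equal to $1$ and left-hand side $\sum_{i\in I} x_i$ for some $I \subseteq P\backslash P^*$. In parallel, $\CP(P')$ is cut out by $x_p \geq 0$ together with $\sum_{c\in C} x_c \leq 1$ over (maximal) chains $C$ of $P'$.

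Second, I would establish a bijection between maximal chains of $P$ between marked elements and maximal chains of $P'$. Forward direction: restricting $a \prec p_1 \prec \cdots \prec p_k \prec b$ to $(p_1,\ldots,p_k)$ gives a chain in $P'$ that is maximal there, since any unmarked element inserted strictly below $p_1$ would lie between the covering pair $a \prec p_1$, and similarly at the top and between consecutive $p_i$'s. Backward direction: given a maximal chain $p_1 \prec \cdots \prec p_k$ in $P'$, the inclusion $\min(P) \subseteq P^*$ forces $p_1$ to have a marked predecessor, and any maximal such $a$ must cover $p_1$ in $P$ (else some $q \in P\backslash P^*$ between $a$ and $p_1$ would extend the chain in $P'$, contradicting maximality); symmetrically one obtains a covering marked element $b$ above $p_k$.

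Under this bijection, the inequality $\sum_{i=1}^{k} x_{p_i} \leq \lambda(b) - \lambda(a) = 1$ of $\CP(P,\lambda)$ matches the chain inequality $\sum_{i=1}^{k} x_{p_i} \leq 1$ of $\CP(P')$, and the nonnegativity constraints coincide; hence $\CP(P,\lambda) = \CP(P')$, which is a chain polytope in the sense of Stanley. The step I expect to be the main obstacle is this bijection, together with the clean use of the hypotheses: strictness of $(P,\lambda)$ keeps saturated chains between marked elements from degenerating, while the irredundancy assumption is what permits comparing the irredundant description of $\CP(P,\lambda)$ with the canonical (a priori possibly redundant) chain description of $\CP(P')$ without losing or introducing stray inequalities on either side.
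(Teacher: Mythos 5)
There is a genuine gap: the poset you use is the wrong one, and the identification $\CP(P,\lambda)=\CP(P')$ for $P'=P\backslash P^*$ with the induced order fails in general. The paper's own Example \ref{bsplem3-1} is a counterexample to your construction. There the induced order on $\{x_1,x_2\}$ is the two-element chain $x_1\prec x_2$ (since $1\prec x_1\prec x_2\prec 3$ in $P$), so $\CP(P')$ is the triangle $\{x_1,x_2\geq 0,\ x_1+x_2\leq 1\}$, whereas $\CP(P,\lambda)$ is the unit square: the chain inequality $x_1+x_2\leq 2$ coming from the long chain is redundant and hence absent from the irredundant description, so the hypothesis of the lemma is satisfied, yet your $P'$ reintroduces exactly that chain with the wrong right-hand side. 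The same example breaks your claimed bijection in both directions: the restriction of the maximal $P$-chain $1\prec x_1\prec 2$ to unmarked elements is $(x_1)$, which is \emph{not} maximal in $P'$ (it extends to $x_1\prec x_2$), because an element extending the chain above $p_k$ in $P'$ only needs to lie above $p_k$ in $P$, not between $p_k$ and $b$. More fundamentally, your argument only uses the hypothesis to evaluate $\lambda(b)-\lambda(a)=1$ on the surviving inequalities, but says nothing about the chains of $P'$ whose corresponding $P$-chain inequalities were discarded as redundant; those are precisely the chains that can ruin the equality of polytopes.

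The paper's proof avoids this by not taking the induced subposet at all: it builds a poset on $P\backslash P^*$ whose maximal chains are, by fiat, exactly the index sets $I$ appearing in the irredundant description, and identifies $\CP(P,\lambda)$ with the chain polytope of that poset. (That step is itself terse --- one should check such a poset exists, i.e., that the family of sets $I$ is realizable as the set of maximal chains of a poset --- but it is the structurally correct move.) To repair your argument you would need to either adopt that construction, or prove that after deleting from the induced order every covering relation that only occurs inside chains whose inequalities are redundant, the resulting poset has the surviving $I$'s as its maximal chains; as written, the proposal proves a false statement.
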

\begin{proof}
If $\CP(P,\lambda)$ is only described by inequalities
\begin{align*}
x_i \geq 0 \text{ for } i\in P\backslash P^* \text{ and }\sum_{i\in I} x_i \leq 1 \text{ for } I \subseteq P\backslash P^*,
\end{align*}
we can construct a poset for the elements $P\backslash P^*$, where all maximal chains are the elements in the $I$ from the describing inequalities.\\
For this poset the chain polytope is obviously the polytope $\CP(P,\lambda)$.
\end{proof}

\begin{figure}[h!]
\begin{subfigure}{0.49\linewidth}
\centering
\begin{tikzcd}[ampersand replacement=\&]
	\& 3 \\
	\& {x_2} \& 2 \\
	2 \& {x_1} \\
	\& 1
	\arrow[no head, from=1-2, to=2-2]
	\arrow[no head, from=2-2, to=3-1]
	\arrow[no head, from=2-2, to=3-2]
	\arrow[no head, from=3-2, to=2-3]
	\arrow[no head, from=3-2, to=4-2]
\end{tikzcd}
\caption{a marked poset $(P,\lambda)$}
\label{fig:bspmchainpos1}
\end{subfigure}
\begin{subfigure}{0.49\linewidth}
\centering
\includegraphics[width=0.7\linewidth]{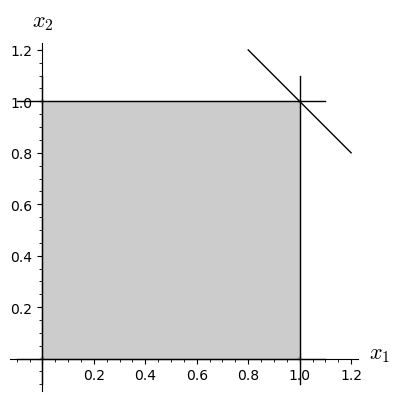}
\caption{the marked chain polytope $\CP(P,\lambda)$}
\label{fig:bspmchainpos2}
\end{subfigure}
\caption{The marked poset $(P,\lambda)$ from Example \ref{bsplem3-1} and the corresponding marked chain polytope $\CP(P,\lambda)$.}
\label{fig:bspmchainpos}
\end{figure}

\begin{figure}[h!]

  \savebox{\imagebox}{\includegraphics[width=0.32\linewidth]{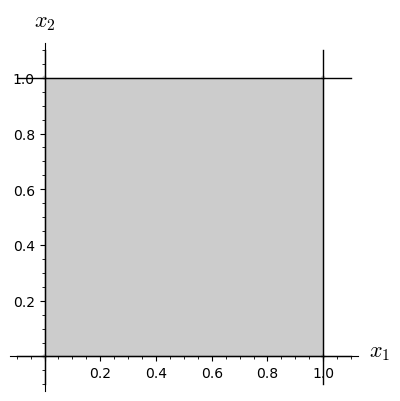}}%
  \begin{subfigure}[t]{.49\linewidth}
    \centering\raisebox{\dimexpr.5\ht\imagebox-.5\height}{
      \begin{tikzcd}[ampersand replacement=\&]
	{x_2} \&\& {x_1}
\end{tikzcd}}

\caption{a poset $P'$}
  \end{subfigure}
  \begin{subfigure}[t]{.49\linewidth}
    \centering\usebox{\imagebox}
    \caption{the chain polytope $\CP_{P'}$}
  \end{subfigure}\qquad

  \caption{The poset $P'$ from Example \ref{bsplem3-1} and the corresponding chain polytope $\CP_{P'}$.}
\label{fig:bspchainpos}
\end{figure}

To understand this lemma better, we will look at a small example, to understand the consequences better.

\begin{bsp}\label{bsplem3-1}
We consider the marked chain polytope $\CP(P,\lambda)$ given by the marked poset $(P,\lambda)$ in Figure \ref{fig:bspmchainpos}. The unmarked elements of $P$ are the elements $x_1$ and $x_2$, while the numbers are the markings from $\lambda$ for the corresponding elements in $P^*$.\\
The marked chain polytope $\CP(P,\lambda)$ is constructed by the inequalities
\begin{align*}
&x_i \geq 0~~\text{for } i \in \{1,2\} &\text{ and}\\
&x_i \leq 1~~\text{for } i \in \{1,2\} &\text{ and}\\
&x_1+ x_2 \leq 2.&
\end{align*}
But we see in Figure \ref{fig:bspmchainpos}, that the last inequality does not define the polytope $\CP(P,\lambda)$. Therefore the conditions of Lemma \ref{chainiso} are met.\\
In Figure \ref{fig:bspchainpos} we see, the poset with corresponding chain polytope, which is the same polytope as $\CP(P,\lambda)$.
\end{bsp}

As we said before we do not know much about the faces of marked chain polytopes, but we can prove that there are always some special facets and that $0$ is always a vertex of a marked chain polytope. This will become very useful for the characterisation of marked chain polytopes, but also later for the proof of Theorem \ref{chainorder2l}.

\begin{lemm}\label{minchainfacet}
Let $(P,\lambda)$ be a regular marked poset. For every element $p \in P\backslash P^*$ there exists a facet of $\CP(P,\lambda)$, which is described by $x_p=0$. Therefore $0$ is a vertex of $\CP(P,\lambda)$.
\end{lemm}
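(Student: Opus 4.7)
The plan is to produce the vertex $0$ and the facets $\{x_p = 0\}$ simultaneously, by exhibiting, for each $q \in P\setminus P^*$, an explicit point $c_q e_q \in \CP(P, \lambda)$ lying on the $q$-th coordinate axis. First I would verify that the origin lies in $\CP(P,\lambda)$: the inequalities $x_p \geq 0$ are immediate, and each chain inequality $\sum x_{p_i} \leq \lambda(b) - \lambda(a)$ reduces to $0 \leq \lambda(b) - \lambda(a)$, which is in fact strictly positive by strictness of $(P, \lambda)$. The origin is then a vertex by the standard extremality argument: every point of $\CP(P, \lambda)$ has all coordinates nonnegative, so the origin cannot lie in the relative interior of any line segment contained in $\CP(P, \lambda)$.

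Next, for each $q \in P \setminus P^*$ set
\[
c_q := \min\bigl\{\lambda(b) - \lambda(a) : a \prec p_1 \prec \cdots \prec p_k \prec b \text{ a maximal chain containing } q\bigr\},
\]
which is strictly positive, again by strictness. I claim the scaled basis vector $c_q e_q \in \R^{P \setminus P^*}$ lies in $\CP(P,\lambda)$: the condition $x \geq 0$ is trivial, any chain inequality not involving $q$ reduces to $0 \leq \lambda(b) - \lambda(a)$, and any chain inequality through $q$ becomes $c_q \leq \lambda(b) - \lambda(a)$, which holds by the very definition of $c_q$.

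For the facet claim, fix $p \in P \setminus P^*$ and set $d := |P \setminus P^*|$. Consider the $d$ points $\{0\} \cup \{c_q e_q : q \in P\setminus P^*,\ q \neq p\}$. All of them lie in $\CP(P, \lambda) \cap \{x_p = 0\}$, and they are affinely independent: the $d-1$ difference vectors $c_q e_q - 0$ lie on pairwise distinct coordinate axes and are nonzero since $c_q > 0$. Hence the face cut out by $x_p = 0$ has dimension at least $d-1$. Since the point $c_p e_p$ lies in $\CP(P, \lambda)$ but not on $\{x_p = 0\}$, this face is proper, so its dimension is exactly $d-1$, and it is a facet.

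The main obstacle is essentially only conceptual: the whole argument rests on strictness, which simultaneously guarantees that each $c_q$ is strictly positive and that the test point $c_p e_p$ really moves off the hyperplane $x_p = 0$. Notably, no detailed description of the face lattice of $\CP(P,\lambda)$ is required — a luxury we do not have for marked chain polytopes — which is precisely what makes this lemma a useful tool in the proofs that follow.
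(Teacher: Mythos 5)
Your proof is correct, and it is noticeably more explicit than the paper's. The paper argues purely on the inequality description: it asserts that the lower bounds $x_p\geq 0$ are irredundant (no lower bound on a single coordinate can be derived from the chain inequalities, which only bound sums from above), hence facet-defining, and then obtains $0$ as the intersection of all these facets. You instead work on the vertex side: you exhibit the explicit points $0$ and $c_q e_q$ inside $\CP(P,\lambda)$ and count affine independence. This buys you two things the paper glosses over: full-dimensionality of $\CP(P,\lambda)$ (without which an irredundant inequality need not be facet-defining), and a clean extremality argument for $0$ being a vertex, namely that a point of the nonnegative orthant with all coordinates zero cannot be a proper convex combination of two distinct points of the orthant. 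The only step worth making explicit is that the minimum defining $c_q$ is taken over a nonempty set: since $\min(P)\cup\max(P)\subseteq P^*$, every unmarked $q$ lies on some saturated chain between two marked elements, so $c_q<\infty$; strictness of the marking then gives $c_q>0$ exactly as you say. With that remark added, your argument is complete and, if anything, closes small gaps in the paper's version.
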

\begin{proof}
Let $(P,\lambda)$ be a regular marked poset and $\CP(P,\lambda)$ the respective marked chain polytope. We know from the Definition \ref{defmarkchain}, that $\CP(P,\lambda)$ is described by inequalities of the form
\begin{align*}
x_p \geq 0  \text{ for } p\in P\backslash P^* \text{ and }\sum_{i\in I} x_i \leq c \text{ for } I \subseteq P\backslash P^*~c\in\R_{\geq 0}.
\end{align*}
Since the inequalities $x_p \geq 0$ are independent and no other lower bound can be formed from the inequalities $\sum_{i\in I} x_i \leq c$, the inequality $x_p \geq 0$ is a facet-describing inequality for each $p \in P\backslash P^*$.\\
The intersection of all these facets is the vertex $0$.
\end{proof}

We can now prove a similar characterisation for marked chain polytopes as the characterisation for marked order polytopes.

\begin{theorem}\label{markchain2l}
	Let $(P,\lambda)$ be a regular marked poset.
	The following conditions are equivalent:
	\begin{enumerate}[label=(\alph*)]
	\item $\CP(P,\lambda)$ is a 2-level polytope.
	\item $\CP(P,\lambda)$ is affinely isomorphic to a marked chain polytope, whose markings of each facet describing chain in $P$ only differ by 1. 
	\item $\CP(P,\lambda)$ is affinely isomorphic to a chain polytope.
	\end{enumerate}
	\end{theorem}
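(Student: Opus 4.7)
The plan is to establish (b) $\Leftrightarrow$ (c) together with (a) $\Leftrightarrow$ (c). The implication (b) $\Rightarrow$ (c) is immediate from Lemma~\ref{chainiso}: a marked chain polytope whose facet-describing chains all have marking difference $1$ has facet inequalities of the form $x_i \geq 0$ and $\sum_{i \in I} x_i \leq 1$, so it is a chain polytope. Conversely (c) $\Rightarrow$ (b) holds since any chain polytope can be viewed as a marked chain polytope on a poset with adjoined minimum (marked $0$) and maximum (marked $1$), in which every facet-describing chain trivially has marking difference $1$. And (c) $\Rightarrow$ (a) follows because chain polytopes are compressed and hence 2-level (as cited in the introduction), while 2-levelness is preserved under affine isomorphisms.

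The substantive direction is (a) $\Rightarrow$ (c). Assume $\CP(P,\lambda)$ is 2-level and, for each $p \in P \setminus P^*$, set
\[
c_p \;:=\; \min\bigl\{\lambda(b)-\lambda(a) : a \prec p_1 \prec \cdots \prec p_r \prec b \text{ saturated, } a,b \in P^*,\ p\in\{p_1,\ldots,p_r\}\bigr\}.
\]
I would first verify that $c_p\cdot e_p$ is a vertex of $\CP(P,\lambda)$: the $|P\setminus P^*|-1$ tight constraints $x_q=0$ for $q\neq p$, together with the chain inequality attaining the minimum in the definition of $c_p$, yield $|P\setminus P^*|$ linearly independent tight constraints. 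By Lemma~\ref{minchainfacet}, the origin is also a vertex. For any facet $F_C$ defined by $\sum_{p \in C} x_p = k_C$ with $k_C = \lambda(b)-\lambda(a) > 0$, the parallel hyperplane guaranteed by 2-levelness must pass through $0$, so it equals $\sum_{p \in C} x_p = 0$; hence every vertex $v$ satisfies $\sum_{p \in C} v_p \in \{0,k_C\}$. Evaluating at $c_p\cdot e_p$ for $p \in C$ forces $c_p \in \{0, k_C\}$, and $c_p > 0$ then gives $c_p = k_C$. Consequently every facet-describing chain through a common element $p$ has the same right-hand side $c_p$.

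Applying the affine map $T(x)_p := x_p/c_p$, each facet-defining inequality $\sum_{p \in C} x_p \leq k_C$ transforms into $\sum_{p \in C} x'_p \leq 1$ (using $c_p = k_C$ uniformly on $C$), with non-negativity preserved. Hence $T\bigl(\CP(P,\lambda)\bigr)$ admits a description of the form required by Lemma~\ref{chainiso} and is therefore a chain polytope, giving (c). I anticipate the principal obstacle to be the rigorous verification that $c_p\cdot e_p$ is indeed a vertex, together with the global consistency of the rescaling $T$; both rest on the uniform equality $c_p = k_C$ across all facet-defining chains through $p$, which is precisely what the 2-levelness hypothesis forces through the parallel-hyperplane argument.
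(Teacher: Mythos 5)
Your proposal is correct, and for the substantive direction it takes a genuinely different route from the paper. The paper proves (a) $\Rightarrow$ (b) by first using Lemma~\ref{minchainfacet} and 2-levelness of the facets $x_p\geq 0$ to rescale each coordinate and obtain a $(0,1)$-polytope, then invoking a separation-theorem argument to show that the indicator point of a single chain element is a vertex, and finally deriving the contradiction $0=c-d=1$ from the parallel-hyperplane condition when a chain facet has right-hand side $c\geq 2$. You instead exhibit the explicit vertices $c_p e_p$ (feasible by minimality of $c_p$, and a vertex by the $|P\setminus P^*|$ linearly independent tight constraints you list — do also record feasibility, which is immediate since every chain inequality containing $p$ has right-hand side at least $c_p$) and combine them with the vertex $0$ from Lemma~\ref{minchainfacet}: since $0$ cannot lie on a chain facet (strictness gives $k_C>0$), the parallel hyperplane is $\sum_{p\in C}x_p=0$, and evaluating at $c_pe_p$ forces $c_p=k_C$ for every $p\in C$. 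This buys two things. First, it bypasses the separation argument and the intermediate $(0,1)$-polytope entirely. Second, it makes explicit why the coordinatewise rescaling $x_p\mapsto x_p/c_p$ is globally consistent — namely that $c_p$ is constant along each facet-defining chain — which is exactly the point the paper's proof passes over when it rescales by $1/c_p$ and then continues to write the scaled chain inequalities with unit coefficients. Your closing of the cycle is also arranged differently (you prove (a) $\Rightarrow$ (c) directly together with (b) $\Leftrightarrow$ (c), rather than the paper's (b) $\Rightarrow$ (c) $\Rightarrow$ (a) $\Rightarrow$ (b)), but all equivalences are covered.
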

	\begin{proof}
	Let $(P,\lambda)$ be a regular marked poset and $d=|P\backslash P^*|$.\\
	Because of Lemma \ref{chainiso} we see \textit{(b)} $\Rightarrow$\textit{(c)}. Since every compressed polytope is a 2-level polytope and chain polytopes are compressed, \textit{(c)} $\Rightarrow$\textit{(a)} is trivial \cite{ohsugi2001compressed}.\\
	Let $\CP(P,\lambda)$ be a 2-level polytope. We want to show \textit{(a)} $\Rightarrow$\textit{(b)}. Therefore we look at the different types of possible describing inequalities again.\\
	From Lemma \ref{minchainfacet} we already know, that $x_p \geq 0$ is a facet for all $p \in P\backslash P^*$. Since $\CP(P,\lambda)$ is a 2-level polytope, all vertices need to fulfill the equation $x_p=0$ or the equation $x_p=c_p$ for $c_p\in\R_{> 0}$ for all $p \in P\backslash P^*$. By scaling every coordinate with $\tfrac{1}{c_p}$ respectively, we get a $(0,1)$-polytope $\mathcal{P}$.\\
	  Assume $\sum_{i\in I}x_i + x_p \leq c$ is a defining inequality of $\mathcal{P}$ with $I\cup \{p\}\subseteq P\backslash P^*$ a maximal chain and $c \geq 2$. We want to show that there exists a vertex $w$ with $w_p=1$ and $w_i=0$ for $\forall i \in P\backslash P^*$. Because we have a $(0,1)$-polytope, $w$ cannot be in the interior of any face of $\mathcal{P}$. Therefore $w$ is a vertex or $w \not\in \mathcal{P}$.\\
	  Assume that $w \not\in \mathcal{P}$. By Separation Theorem \cite[Prop 1.10]{ziegler2006}, there exists a facet defining hyperplane which separates $w$ from the polytope. This is a contradiction, because there only exists defining inequalities like $x_q\geq 0$ for $q\in P\backslash P^*$ and $\sum_{i\in J}x_i \leq c$, where $c \in \N$ and $J$ a chain of $P$. These inequalities cannot separate $w$ from the polytope.\\
	  We now know, that $\mathcal{P}$ has the vertices $0$ and $w$. Because $\mathcal{P}$ is a 2-level polytope and the inequality $\sum_{i\in I}x_i + x_p \leq c$ is a defining inequality, all vertices have to fulfill one of the following equations:
	  \begin{align*}
	  (1)& \sum_{i\in I}x_i + x_p = c &\text{ or }\\
	  (2)& \sum_{i\in I}x_i + x_p = c-d ~~~d\in \N.&
	  \end{align*}
	Since $0$ is a vertex of $\mathcal{P}$, and it does not fulfill the first equation it must fulfill the second equation. The same is true for the vertex $w$, since we assume $c\geq 2$. We can conclude
	\begin{align*}
	\sum_{i\in I}0 + 0 = 0 = c-d = \sum_{i\in I}0 + 1 = 1,
	\end{align*}
	which is absurd. Therefore $c$ needs to be equal to $1$.\\
	In conclusion, all defining facets of $\mathcal{P}$ are of the form
	\begin{align*}
x_i \geq 0 ~~&\text{for } i\in P\backslash P^* &\text{ and }\\ 
\sum_{i\in I} x_i \leq 1 ~~&\text{for } I \subseteq P\backslash P^*.&
\end{align*}
Therefore the markings of each facet describing chain of $P$ only differ by 1, which concludes the proof. 
\end{proof}

\section{2-level Marked Chain-Order Polytope}\label{section5}

For marked chain-order polytopes we also do not know much about their faces, but we can characterise them using marked order and marked chain polytopes.

\begin{lemm}\label{pointchainorder}\textnormal{\cite[Lem 2.7]{fang2018minkowski}}
Let $P$ be a poset with a decomposition $P = P^* \cup C \cup O$ into marked,
chain and order elements. For $x = (\lambda, x_C , x_O) \in \R^P$ we have $x \in \OP_{C,O}(P, \lambda)$ if and only if $x_O \in \OP(P \backslash C, \lambda)$ and $x_C \in \CP(P, \lambda\sqcup x_O)$, where $\lambda \sqcup x_O$ is the map $P^* \cup O \rightarrow \R$
that restricts to $\lambda$ on $P^*$ and to $x_O$ on $O$.
\end{lemm}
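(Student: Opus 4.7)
The plan is to prove both implications directly from the definitions by carefully translating each defining inequality of $\OP_{C,O}(P,\lambda)$ into the corresponding inequality of $\OP(P\setminus C,\lambda)$ or $\CP(P,\lambda\sqcup x_O)$. No deep structural theorem seems needed; the content is that the three defining conditions of a marked chain-order polytope split cleanly into order-type conditions (living on $P\setminus C$) and chain-type conditions (living on $P$ with the extra markings coming from $x_O$).

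For the forward direction, I would assume $x=(\lambda,x_C,x_O)\in\OP_{C,O}(P,\lambda)$ and check the two conclusions in turn. The equality $x_a=\lambda(a)$ for $a\in P^*$ transfers verbatim to $\OP(P\setminus C,\lambda)$. For each covering relation $p\prec q$ in the induced subposet $P\setminus C$ I would consider the saturated chain $p\prec p_1\prec\cdots\prec p_r\prec q$ in $P$ with all intermediate elements in $C$ (possibly $r=0$); condition (3) of the chain-order polytope gives $x_{p_1}+\cdots+x_{p_r}\leq x_q-x_p$, and combining with $x_{p_i}\geq 0$ from condition (2) yields $x_p\leq x_q$, which is the marked order condition. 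For $x_C\in\CP(P,\lambda\sqcup x_O)$, non-negativity is condition (2) and each maximal chain inequality
\begin{align*}
x_{p_1}+\cdots+x_{p_k}\leq (\lambda\sqcup x_O)(b)-(\lambda\sqcup x_O)(a)
\end{align*}
for a chain $a\prec p_1\prec\cdots\prec p_k\prec b$ with $a,b\in P^*\cup O$ and $p_i\in C$ is exactly condition (3) after substituting $x_a$ for either $\lambda(a)$ or $x_O(a)$ and similarly for $b$.

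For the reverse direction, I would assume both $x_O\in\OP(P\setminus C,\lambda)$ and $x_C\in\CP(P,\lambda\sqcup x_O)$, and verify conditions (1)--(3) of $\OP_{C,O}(P,\lambda)$. Conditions (1) and (2) are immediate from the order and chain polytope memberships respectively. The key point is condition (3): for a saturated chain $a\prec p_1\prec\cdots\prec p_r\prec b$ with $a,b\in P^*\cup O$ and $p_i\in C$, I would split into the cases $r\geq 1$ and $r=0$. The case $r\geq 1$ is exactly one of the defining inequalities of $\CP(P,\lambda\sqcup x_O)$, giving $x_{p_1}+\cdots+x_{p_r}\leq x_b-x_a$. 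The case $r=0$ reduces to the bare comparability statement $x_a\leq x_b$, which follows from $x_O\in\OP(P\setminus C,\lambda)$ since $a\prec b$ is then a relation in the induced subposet $P\setminus C$.

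The main point requiring care, rather than a genuine obstacle, is bookkeeping the $r=0$ case of condition (3) and verifying that relations of $P\setminus C$ are captured by chains of $P$ passing through $C$-elements, so that the non-negativity of the $x_{p_i}$ is what converts chain-order inequalities into marked order inequalities on $P\setminus C$. Once this correspondence is made explicit, the equivalence is a direct unpacking of definitions.
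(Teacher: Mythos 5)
Your proof is correct; the paper itself gives no proof of this lemma, quoting it directly as \cite[Lem 2.7]{fang2018minkowski}, and your definition-unpacking argument is essentially the standard one from that source. The only step worth making fully explicit is that a covering relation $p \prec q$ in the induced subposet $P\setminus C$ lifts to a saturated chain of $P$ whose intermediate elements all lie in $C$ (any element of $P^*\cup O$ strictly between $p$ and $q$ would contradict the covering), which is what lets you combine conditions (2) and (3) to get $x_p\leq x_q$, and, in the reverse direction, that $x_O\in\OP(P\setminus C,\lambda)$ is exactly what makes $\lambda\sqcup x_O$ an order-preserving marking so that $\CP(P,\lambda\sqcup x_O)$ is well defined.
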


With this lemma we can characterise the 2-levelness of marked chain-order polytopes, by using a lot of the results for marked order and marked chain polytopes, we proved before.

\begin{theorem}\label{chainorder2l}
Let $(P,\lambda)$ be a regular marked poset and $P = P^* \cup C \cup O$ a decomposition into marked, chain and order elements. The marked chain-order polytope $\OP_{C,O}(P, \lambda)$ is a 2-level polytope if and only if
	\begin{enumerate}[label=(\alph*)]
	\item $\OP(P\backslash C,\lambda)$ is a 2-level polytope, and
	\item $\OP_{C,O}(P, \lambda)$ is affinely isomorphic to a marked chain-order polytope, whose higher and lower bounds of chain elements only differ by 1.
	\end{enumerate}
\end{theorem}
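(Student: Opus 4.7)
The plan is to exploit the fibered structure given by Lemma \ref{pointchainorder}, which views $\OP_{C,O}(P,\lambda)$ as sitting over $\OP(P\backslash C,\lambda)$ with marked-chain-polytope fibers $\CP(P, \lambda \sqcup x_O)$. Both directions of the equivalence will then be reduced to the already-established Theorems \ref{markorder2l} and \ref{markchain2l}, together with the observation that faces of 2-level polytopes are 2-level (immediate from Definition \ref{Def2level}).

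For the necessity direction, I would first prove (a) by exhibiting $\OP(P\backslash C,\lambda)$ as a face of $\OP_{C,O}(P,\lambda)$. By Lemma \ref{minchainfacet} applied to each fiber, the inequalities $x_p \geq 0$ for $p \in C$ are facet-defining for $\OP_{C,O}(P,\lambda)$, and $0$ is a vertex of every fiber $\CP(P, \lambda \sqcup x_O)$. Hence the set $\{x \in \OP_{C,O}(P,\lambda) : x_p = 0 \text{ for all } p \in C\}$ is the intersection of these facets and, via Lemma \ref{pointchainorder}, coincides with (an affine copy of) $\OP(P\backslash C,\lambda)$; being a face of a 2-level polytope, it is 2-level. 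For (b), I would fix a vertex $v_O$ of $\OP(P\backslash C,\lambda)$ and consider the slice of $\OP_{C,O}(P,\lambda)$ lying above it, which by Lemma \ref{pointchainorder} is the marked chain polytope $\CP(P, \lambda \sqcup v_O)$ and is again (an affine image of) a face of $\OP_{C,O}(P,\lambda)$. Applying the argument of Theorem \ref{markchain2l} to this face forces each chain-facet bound to be normalizable to $1$, and performing the corresponding coordinate rescaling in the $C$-directions yields (b).

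For the sufficiency direction, assume (a) and (b). By Theorem \ref{markorder2l}, $\OP(P\backslash C,\lambda)$ is affinely isomorphic to an ordinary order polytope, in particular a $(0,1)$-polytope, and after the affine transformation from (b) every chain bound may be assumed to equal $1$. I would then check 2-levelness facet-by-facet using the fibered description: the facets $x_p = 0$ for $p \in C$ pair with the hyperplane $x_p = 1$ because vertices of the normalized fibers have coordinates in $\{0,1\}$; the pure order-type facets in $P \backslash C$ are handled directly by the 2-levelness of $\OP(P\backslash C,\lambda)$; and the mixed chain facets $\sum_i x_{p_i} = x_b - x_a$ are controlled by the normalization from (b).

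The main obstacle lies precisely in this last step. When a chain facet $\sum_i x_{p_i} = x_b - x_a$ has an endpoint $a$ or $b$ in $O$, the right-hand side is a variable depending on $x_O$ rather than a fixed constant, so establishing 2-levelness requires showing that $\sum_i v_{p_i} - v_b + v_a$ takes only one value as $v$ ranges over off-facet vertices of $\OP_{C,O}(P,\lambda)$. This will require a careful coupling between vertices of the order part and vertices of the fiber chain polytopes sitting above them, and it is here that the simultaneous use of the normalization in (b), the $(0,1)$-structure of the order polytope from (a), and the compatibility built into Lemma \ref{pointchainorder} will all be essential.
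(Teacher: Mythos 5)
Your overall framework coincides with the paper's: both use Lemma \ref{pointchainorder} to view $\OP_{C,O}(P,\lambda)$ as fibered over $\OP(P\backslash C,\lambda)$, both obtain condition (a) by cutting with the facets $x_p=0$, $p\in C$, to exhibit $\{0\}\times\OP(P\backslash C,\lambda)$ as a face (this part of your argument is essentially identical to the paper's and is fine), and both check sufficiency facet by facet. However, in both directions you stop exactly where the actual content of the theorem lies. For sufficiency, you name the mixed facets $\sum_i x_{p_i} + x_p \leq x_q$ with $p,q\in O$ as ``the main obstacle'' and defer it to an unspecified ``careful coupling''; that step is not optional, and the paper closes it by combining (a) with Theorem \ref{markorder2l}: each connected component of $P\backslash C$ has a unique maximal and minimal marking, so $x_p$ and $x_q$ are squeezed between the \emph{same} pair of markings $\lambda(a)\leq\lambda(b)$, condition (b) forces $\lambda(b)-\lambda(a)=1$, hence at vertices $x_q-x_p\in\{0,1\}$, each $x_i\in\{0,1\}$, and at most one chain coordinate is nonzero, which yields the two parallel hyperplanes. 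Without some version of this argument the sufficiency direction is not proved.

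For the necessity of (b), your route via the fiber $\CP(P,\lambda\sqcup v_O)$ over a vertex $v_O$ has two genuine problems. First, Theorem \ref{markchain2l} is stated for strict and irredundant marked posets, but at a vertex $v_O$ every order coordinate equals some marking (Proposition \ref{orderpart}: the partition of a vertex has no free blocks), so $\lambda\sqcup v_O$ is generally not strict and the theorem cannot be invoked off the shelf. Second, the chain bounds $v_q - v_p$ of the fiber, and hence the rescaling that Theorem \ref{markchain2l} would produce, depend on the choice of $v_O$; you would still have to show that a single rescaling of the $C$-coordinates works for all fibers simultaneously and that the resulting bound for each facet-defining chain is the difference $\lambda(b)-\lambda(a)$ of the extreme markings of the relevant component, and that this difference is $1$. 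The paper avoids all of this by arguing directly on $\OP_{C,O}(P,\lambda)$: after rescaling each $x_c$ so that its two vertex values are $0$ and $1$, it produces for each chain facet an explicit vertex $w$ with one chain coordinate equal to $1$, the others $0$, and the order endpoint at its lower bound $\lambda(a)$ (proving nonemptiness of the corresponding face), whence $1+\lambda(a)=\lambda(b)$. Some such explicit vertex construction is needed to complete your argument.
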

\begin{proof}
Let us first assume $\OP(P\backslash C,\lambda)$ is a 2-level polytope and $\OP_{C,O}(P, \lambda)$ is affinely isomorphic to a marked chain-order polytope, where the higher and lower bounds of chain elements only differ by 1. We want to show, that $\OP_{C,O}(P, \lambda)$ is a 2-level polytope.\\
Since 2-levelness is preserved under affine transformations, we will use from now on the marked chain-order polytope, where the higher and lower bounds of chain elements only differ by 1 and we will call it $\OP_{C,O}(P, \lambda)$ for simplicity.\\
We will now consider all different cases of facets of $\OP_{C,O}(P, \lambda)$ and will show, that all vertices are in the facet or in a parallel hyperplane.\\
Case 1: Assume $x_p=\lambda(a)$ for $p\in O$ and $a\in P^*$ defines a facet.\\
Since $\OP(P\backslash C, \lambda)$ is a 2-level polytope, all $x_p$ of vertices with $p\in O$ can only have the same upper and lower bound if they are connected in the Hasse-diagram. We call them $\lambda(a)$ and $\lambda(b)$. If these $p$ are only connected to other order elements, they are independent from all chain elements and 2-levelness follows for these elements from the 2-levelness of $\OP(P\backslash C, \lambda)$.\\
If they have a connection to chain elements, $\lambda(a)$ and $\lambda(b)$ only differ by 1. Since $\OP_{C,O}(P, \lambda)$ is a lattice polytope, $x_p$ has only two different possible values. This concludes 2-levelness for facets of the form $x_p=\lambda(a)$.\\
Case 2: Assume $\sum_{i\in I} x_i \leq 1$ is a facet for a chain $I \subseteq C$.\\
From this equation we get $0\leq x_i \leq 1$ for all $i\in I$. It follows directly from this, that all vertices lives in the facet $\sum_{i\in I} x_i = 1$ or $\sum_{i\in I} x_i = 0$. Therefore 2-levelness is true.\\
Case 3: Assume $\sum_{i\in I} x_i + x_p \leq \lambda(b)$ defines a facet for $I\subseteq C$, $p\in O$, $b \in P^*$ and $I$ is a maximal chain between $p$ and $b$.\\
With the given properties, we can follow that all $x_i$ for $i\in I$ and $x_p$ lie between some marked elements $a$ and $b$, such that $\lambda(a)=\lambda(b)-1$. It follows $0\leq x_i \leq 1$ for all $i \in I$ and $\lambda(b)-1\leq x_p \leq \lambda(b)$. In conclusion, all vertices fulfill either $\sum_{i\in I} x_i + x_p = \lambda(b)$ or $\sum_{i\in I} x_i + x_p = \lambda(b)-1$.\\
Case 4: Assume $\sum_{i\in I} x_i + \lambda(a) \leq x_p$ defines a facet for $I\subseteq C$, $p\in O$, $a \in P^*$ and $I$ is a maximal chain between $a$ and $p$.\\
This case is analogous to Case 3.\\
Case 5: Assume $\sum_{i\in I} x_i + x_p \leq x_q$ defines a facet for $I\subseteq C$, $p,q\in O$ and $I$ is a maximal chain between $p$ and $q$.\\
Because $\OP(P\backslash C,\lambda)$ is a 2-level polytope, $x_p$ and $x_q$ lie between the same upper and lower bounds $\lambda(a)$ and $\lambda(b)$. Since the chain elements of the chain also lie between these two bounds, we know $\lambda(a)=\lambda(b)-1$. Therefore $x_p$ and $x_q$ can differ at most by at most $1$ and in addition $0\leq x_i \leq 1$ for all $i\in I$.\\
In conclusion all vertices fulfill either $\sum_{i\in I} x_i + x_p = x_q$ or $\sum_{i\in I} x_i + x_p +1 = x_q$.\\
Case 6: Assume $x_c\geq 0$ for $c \in C$ is a facet.\\
The coordinate $x_c$ must be bounded by one of the inequalities from one of the cases before, because otherwise $\OP_{C,O}(P,\lambda)$ would not be a polytope. In each of these cases $x_c$ lies between $0$ and $1$. It follows trivial, that all vertices fulfill either $x_c=0$ or $x_c=1$.\\
In conclusion, we can see for all facets all vertices lie in the facet or a parallel hyperplane. Therefore, $\OP_{C,O}(P,\lambda)$ is a 2-level polytope. This concludes the first implication of the proof.\\

Assume that $\OP_{C,O}(P,\lambda)$ is a full-dimensional 2-level polytope (coordinates which are fixed can be ignored).\\
We know from Lemma \ref{minchainfacet} that $0$ is a vertex of all marked chain polytopes. With Lemma \ref{pointchainorder} we can see, that for all $x \in \OP(P\backslash C,\lambda)$ the point $(0,x)$ is a point of the marked chain-order polytope. Also for the same reason as in the proof of Lemma \ref{minchainfacet} we see, that $x_c\geq 0$ defines a facet of $\OP_{C,O}(P,\lambda)$ for all $c \in C$. Combining these two properties we get
\begin{align*}
\OP_{C,O}(P,\lambda) \cap \bigcap_{c \in C} H_{x_c=0}=\{0\}\times \OP(P\backslash C,\lambda)
\end{align*}
and $\OP(P\backslash C, \lambda)$ is a face of $\OP_{C,O}(P,\lambda)$. Since every face of a 2-level polytope is a 2-level polytope, $\OP(P\backslash C, \lambda)$ needs to be a 2-level polytope.\\
Because $x_c\geq 0$ defines a facet for all $c\in C$, again for all $c \in C$ all vertices satisfies the equation $x_c=0$ or $x_c=d_c$ for some $d_c \in \R$. We can now scale all chain element coordinates by $\tfrac{1}{d_c}$. We will use the same name from here on for the resulting marked chain-order polytope.\\
Consider a facet $\sum_{i\in I}x_i +x_p \leq \lambda(b)$ for a chain $I\subseteq C$ with starts in $p\in O$ and ends in $b\in P^*$. Let $\lambda(a)$ be the lower bound for $x_p$ and the elements in the chain. Since $\OP(P\backslash C,\lambda)$ is a face of $\OP_{C,O}(P,\lambda)$, there exists a vertex with $x_p=\lambda(a)$ and $x_i=0$ for all $i\in I$. Therefore all vertices have to fulfill either
\begin{align*}
(1)& \sum_{i\in I}x_i + x_p = \lambda(b) &or\\
(2)& \sum_{i\in I}x_i + x_p = \lambda(a).&
\end{align*}
Also because $x_p\geq \lambda(a)$ and $x_c\leq 1$ for one $c\in I$, the set
\begin{align*}
\OP_{C,O}(P,\lambda)\cap H_{x_p=\lambda(a)} \cap H_{x_c=1}\bigcap_{i \in I\backslash\{c\}} H_{x_i=0}
\end{align*}
is a face of $\OP_{C,O}(P,\lambda)$. If this face is not empty, there exists a vertex $w$ with $w_p=\lambda(a)$, $w_c=1$ and $w_i=0$ for $i \in I\backslash\{c\}$.\\
To show that this face is not empty, assume there exists no point $z \in \OP(P\backslash C,\lambda)$ with $z_p=\lambda(a)$, such that $\exists c \in I$ with $v_c=1$ and $v_i=0$ for $i \in I\backslash\{c\}$ and $v \in \CP(P, \lambda\sqcup z)$. That means, that for every $v_c$, there exists a chain with $c$ inside and the same upper and lower bound. Because of the full-dimensionality, this can only happen if at least one of these bounds is not $z_p=\lambda(a)$ and a bound introduced by a coordinate of $z$. But all coordinates of $z$ except $z_p$ can choose the two values $\lambda(a)$ and $\lambda(b)$. Therefore the assumption is not possible, because we can choose the coordinates of $z$, such that there is a chain with different upper and lower bounds.\\
We conclude, there exists a vertex $w$ with $w_p=\lambda(a)$, $w_c=1$ and $w_i=0$ for $i \in I\backslash\{c\}$. This vertex cannot lie in the hyperplane described by $\sum_{i\in I}x_i + x_p = \lambda(a)$. It holds
\begin{align*}
\sum_{i\in I}w_i + w_p = 1 + \lambda(a) =\lambda(b).
\end{align*}
For facets of the form $\sum_{i\in I}x_i +\lambda(a) \leq x_p$ for a chain $I\subseteq C$ which starts in $a\in P^*$ and ends in $p\in O$, the proof that the upper bound and lower bound only differ by 1 is analogous.\\
The last facet we need to consider is a facet of the form $\sum_{i\in I}x_i +x_p \leq x_q$ for a chain $I\subseteq C$ with starts in $p\in O$ and ends in $q\in O$. The polytope $\OP(P\backslash C,\lambda)$ is 2-level, therefore $x_p$ and $x_q$ have the same upper and lower bound. With this we can do a similar proof as before, to get that the bounds can only differ by 1.
This concludes the proof.
\end{proof}

This theorem also includes Theorems \ref{markorder2l} and Theorem \ref{markchain2l}, if we choose $C=\emptyset$ or $O=\emptyset$ respectively.

\section{Ehrhart polynomial of marked order polytopes}\label{section6}

For an integral polytope $Q \subseteq \R^d$, whose vertices have integer coordinates. We denote the number of lattice points in $Q$ dilated by a factor $n \in \Z_{\geq 0}$ as $$\text{Ehr}_{Q}(n):= |nQ \cap \Z^d|.$$
Ehrhart \cite{ehrhart1962polyedres} discovered that $\text{Ehr}_{Q}(n)$ is a polynomial in $n$ of degree $\dim(Q)$. Therefore, we call $\text{Ehr}_{Q}(n)$ the \textit{Ehrhart polynomial} of $Q$.\\

While working with marked order polytopes to prove the theorems of previous sections, we found a concrete formula for the Ehrhart polynomial of the marked order polytope. Since it is similar to the formula of order polytopes which was proven by Stanley \cite[Thm 4.5.14]{stanley1997enumerative}, we will use some of his techniques and results.

\begin{defi}\textnormal{\cite[Sec 3.5 and 3.12]{stanley1997enumerative}}
Let $P$ be a finite poset with $n$ elements. We call the bijective order preserving maps $\pi:P\rightarrow [n]$ \textit{linear extensions} of $P$.\\
We identify each linear extension $\pi$ with a tuple $(a_1,\ldots,a_n)\in P^n$, which is defined by $$\pi \rightarrow (\pi^{-1}(1),\ldots,\pi^{-1}(n)).$$ We denote the set of all these tuple for a poset $P$ with $\mathcal{L}(P)$.\\
A \textit{descent} of $\pi$ is an index $j \in [n]$ for which $a_{j+1} \prec a_j$ holds. We denote the number of descents of $\pi$ by $d(\pi)$.
\end{defi}

We can think of linear extensions as sorting the elements of a poset linearly in such a way that the order of the poset is preserved in this sorting.

\begin{lemmdefi}\textnormal{\cite[Lem 4.5.1]{stanley1997enumerative}}\label{uniqext}
Let $P$ be a finite poset with $n$ elements, $f:P\rightarrow \Z$ an order-preserving map and $\pi\in \mathcal{L}(P)$. We say that $f$ is \textit{compatible} with $\pi$ if
	\begin{align*}
	(1)&~f(a_1)\leq f(a_2)\leq \ldots \leq f(a_n) \text{ and}\\
	(2)&~f(a_i) < f(a_{i+1})\text{ for } a_i \prec a_{i+1} \text{ in } P
\end{align*}
holds. For every order-preserving map $f$, there exists a unique compatible linear extension $\pi$.
\end{lemmdefi}

\begin{lemm}\textnormal{\cite[Proof of 4.5.14]{stanley1997enumerative}}\label{compaquiv}
	Let $P$ be a finite poset with $n$ elements. An order-preserving map $f:P\rightarrow [m]$ is compatible with $\pi=(a_1,\ldots,a_n)\in \mathcal{L}(P)$ if and only if  
	\begin{align*}
		m-d(\pi)\geq f(a_n)-d_n\geq \cdots\geq f(a_1)-d_1\geq 1
	\end{align*}
	holds, where $d_i$ is defined for all $i\in [n]$ as  
	\begin{align*}
		d_i:=|\{j\in \N~:~j\leq i, a_{j+1} \prec a_j\}|.
	\end{align*}
\end{lemm}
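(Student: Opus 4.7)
The plan is to translate Lemma \ref{uniqext}'s compatibility condition into the claimed chain of inequalities by the standard Stanley substitution $g_i := f(x_i) - d_i$. The strategy is to read off both the monotonicity and the bounds on $f(x_i)$ directly from the definition of ``compatible'', and then see that the $d_i$'s are exactly the offsets needed to collapse the mixed strict/non-strict chain into a uniformly weak chain.

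First I would unpack what it means for $f$ to be compatible with $\pi = (x_1, \ldots, x_n) \in \mathcal{L}(P)$. By Lemma \ref{uniqext}, compatibility says
$$f(x_1) \le f(x_2) \le \cdots \le f(x_n),$$
with a strict inequality $f(x_j) < f(x_{j+1})$ at precisely those positions where the descent condition $x_{j+1} \prec x_j$ of the definition of $d_i$ is met. Since $f$ takes values in $[m]$, we further have $1 \le f(x_1)$ and $f(x_n) \le m$, so that all the information carried by ``$f$ is compatible with $\pi$'' reduces to a chain of inequalities in $f(x_1), \ldots, f(x_n)$ that is strict at a prescribed set of positions and weak elsewhere.

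Next I would apply the substitution $g_i := f(x_i) - d_i$. The key observation is that $d_{i+1} - d_i \in \{0, 1\}$, with the value $1$ occurring exactly at the positions demanding strict inequality. Subtracting $d_{i+1} - d_i$ from both sides of $f(x_{i+1}) - f(x_i) \ge d_{i+1} - d_i$ therefore collapses the mixed strict/weak chain in $f$ to the purely weak chain $g_1 \le g_2 \le \cdots \le g_n$. For the endpoints, $d_1 = 0$ gives $g_1 = f(x_1) - d_1 \ge 1$, and $d_n = d(\pi)$ gives $g_n = f(x_n) - d(\pi) \le m - d(\pi)$. Stringing these inequalities together produces exactly
$$m - d(\pi) \ge f(x_n) - d_n \ge \cdots \ge f(x_1) - d_1 \ge 1,$$
establishing the forward direction.

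The reverse direction is symmetric: given integers satisfying the displayed chain, define $f(x_i) := g_i + d_i$ and verify that the weak chain in $g$ re-expands to the correct mixed strict/weak chain in $f$, that the bounds $1 \le f(x_1)$ and $f(x_n) \le m$ are recovered, and that $f$ is order-preserving. The last point is automatic because $\pi$ is a linear extension: any relation $p \prec q$ in $P$ places $p$ before $q$ in the sequence $(x_1, \ldots, x_n)$, so the monotonicity of the $f(x_i)$ along $\pi$ transfers to $f(p) \le f(q)$. The only real obstacle I expect is bookkeeping: confirming that the indexing of $d_i$ aligns so that each unit jump $d_{i+1} - d_i = 1$ coincides exactly with the position where the compatibility definition demands strict inequality. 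Once that alignment is verified, the rest of the proof is a mechanical substitution.
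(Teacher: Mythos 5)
Your overall strategy --- the substitution $g_i := f(x_i) - d_i$ that converts the mixed strict/weak chain encoding compatibility into a uniformly weak chain squeezed between $1$ and $m - d(\pi)$, together with the symmetric converse --- is exactly the argument in Stanley's proof of Theorem 4.5.14, which is all the paper offers for this lemma (it is cited, not proved). So the approach is the right one and matches the source.

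The one step you defer as ``bookkeeping,'' however, is precisely where the lemma as literally stated does not go through, and the alignment you promise to verify is false under the definitions given in the paper. Three things go wrong. First, for a genuine linear extension $\pi=(x_1,\ldots,x_n)$ one never has $x_{j+1}\prec x_j$ in $P$ (that would contradict $\pi$ being order-preserving), so the paper's $d_i$ is identically zero; the descents must be taken with respect to a fixed natural labeling of $P$ (in the paper's later application, the total order induced by the markings), not with respect to $\prec$ itself. Second, even granting some total order, Lemma \ref{uniqext} as printed demands strictness $f(x_i)<f(x_{i+1})$ when $x_i\prec x_{i+1}$, whereas $d_i$ counts positions with the reversed comparison $x_{j+1}\prec x_j$; these are not the same set of positions, so the unit jumps of $d_i$ do not sit where the strict inequalities are imposed. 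Third, there is an off-by-one: with $d_i=|\{j\le i:\dots\}|$ the increment $d_{i+1}-d_i$ is the indicator of a descent at the pair $(x_{i+1},x_{i+2})$ rather than at $(x_i,x_{i+1})$, and your endpoint computations $d_1=0$, $d_n=d(\pi)$ only hold for the convention $d_i=|\{j<i:\dots\}|$. A complete proof must first restate the descent convention correctly (descents of the word $\pi$ read in a natural labeling, counted strictly before position $i$); once that is done, your substitution argument --- using that $f$ is integer-valued so that $f(x_{i+1})-f(x_i)\ge 1$ is equivalent to strict inequality --- is exactly Stanley's and goes through.
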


\begin{propdefi}
	Let $(P,\lambda)$ be a regular marked poset and\\
	$\pi\in \mathcal{L}(P)$. We define an order $\preceq_\pi$ on $P$ by  
	$$
	p\preceq_\pi q \text{ if and only if } \pi(p)\leq \pi(q).
	$$
	This defines a linear order on the underlying set of $P$. We denoted this new poset by $P_\pi$.
\end{propdefi}

\begin{defi}
	Let $(P,\lambda)$ be a regular marked poset and $\pi\in \mathcal{L}(P)$. A \textit{marked chain} $I$ in $P_\pi$ is a maximal chain $I$ in $P_\pi$ consisting of elements from $P\backslash P^*$ that lies between two elements $a,b\in P^*$. These elements $a$ and $b$ are unique since $P_\pi$ orders the elements linearly. We denote the marked elements which bounds a marked chain $I$ by $a_I$ and $b_I$.
\end{defi}

With these definitions, we can give an explicit description of the Ehrhart polynomial.

\begin{theorem}\label{ehrmarkorder}
	Let $(P,\lambda)$ be a regular marked poset and $\mathcal{L}(P)$ the set of all linear extensions of $P$ where the markings are ordered in increasing order.\\
	The Ehrhart polynomial of $\OP(P,\lambda)$ is
\begin{align*}
	\text{Ehr}_{\OP(P,\lambda)}(m)=\sum_{\pi \in \mathcal{L}(P)}\prod_{\substack{I\subseteq P_\pi\\
			\text{marked chain}}} \binom{m(\lambda(b_I)-\lambda(a_I))-d+|I|}{|I|}.
\end{align*}
Here, $d$ is the number of descents of $\pi$ between $a_I$ and $b_I$.
\end{theorem}
\begin{proof}
Let $(P,\lambda)$ be a regular marked poset.
At first, it is trivial to see, that the number of lattice points of $m\OP(P,\lambda)$ is equal to the number of order preserving maps $f:P\rightarrow \Z$ with $f(a)=m\cdot\lambda(a)$ for all marked elements $a\in P^*$. Therefore we will count these maps.\\
We add the cover $a \prec b$ for all $a,b\in P^*$ if $\lambda(a)<\lambda(b)$ to the poset $P$. We call the new poset also $P$ for easier notations.\\
As a result, all linear extension of $P$ keep the markings of $P^*$ in increasing order.\\
Let $P^*=\{a_1,a_2,\ldots,a_z\}$ be the marked elements, ordered increasingly by their marking. We know from Lemma \ref{compaquiv}, that a order preserving map $f:P\rightarrow [m\cdot\lambda(a_z)]$ with $f(a_i)=m\cdot\lambda(a_i)$ $\forall i \in[z]$ is compatible to a linear extension $\pi$, if and only if
\begin{align*}
m\cdot\lambda(a_z)-d(\pi)\geq f(x_n)-d_n\geq \cdots\geq f(x_1)-d_1\geq 1
\end{align*}
holds. If we insert the already fixed values $f(a_i)=m\cdot\lambda(a_i)$, we can see, that for each marked chain $I$ between $a_i$ and $a_{i+1}$, the map $f$ has to satisfy
\begin{align*}
	m\cdot\lambda(a_{i+1})-d_t\geq f(x_{t-1})-d_{t-1}\geq \cdots \geq f(x_{t-k})-d_{t-k}\geq  m\cdot\lambda(a_{i})-d_{t-(k+1)}.
\end{align*}
Hence, there are
\begin{align*}
	&\left(\binom{m(\lambda(a_{i+1})-\lambda(a_i))-(d_t-d_{t-(k+1)})-1}{k}\right)\\=&\binom{m(\lambda(a_{i+1})-\lambda(a_i))-(d_t-d_{t-(k+1)})+k}{k}
\end{align*}
many possibilities for choosing the values $f(x_j)$ with $j\in\{t-1,\ldots,t-k\}$. For the marked chain $I$ in $P_\pi$ with $k$ elements $d:=(d_t-d_{t-(k+1)})$ is the number of descents starting in $a_i$ and ending in $a_{i+1}$. Therefore the number of possibly maps $f$ which are comparable to $\pi$ is
\begin{align*}
	\prod_{\substack{I\subseteq P_\pi\\ \text{marked chain with}\\
			d \text{ descents}}} \binom{m(\lambda(b_I)-\lambda(a_I))-d+|I|}{|I|}.
\end{align*}
Since each order preserving map $f:P\rightarrow \Z$ with $f(a)=m\cdot\lambda(a)$ for all marked elements $a\in P^*$ is comparable to one unique linear extension by Lemma \ref{uniqext}, we get for the total number of such maps
\begin{align*}
	\sum_{\pi \in \mathcal{L}(P)}\prod_{\substack{I\subseteq P_\pi\\ \text{marked chain with}\\
			d \text{ descents}}} \binom{m(\lambda(b_I)-\lambda(a_I))-d+|I|}{|I|}.
\end{align*}
\end{proof}

With this form of the Ehrhart polynomial we see, that the complicated part to compute from this formula is the product. This product does not exists if we only have one chain in each of the linear extensions. Hence, we only have one maximal and one minimal marked element, which by Theorem \ref{markorder2l} only happens for the simple case of marked order 2-level polytopes.

\begin{figure}[t!]
    \begin{subfigure}[b]{0.49\linewidth}
        \centering
        $\vcenter{\hbox{
        \begin{tikzcd}[ampersand replacement=\&]
            {a_6} \\
            \&\& {x_6} \\
            {a_5} \\
            \&\& {x_5} \\
            {a_4} \\
            \&\& {x_4} \&\& {x_2} \\
            {a_3} \\
            \&\& {x_3} \\
            {a_2} \\
            \&\& {x_1} \\
            {a_1}
            \arrow[no head, from=11-1, to=10-3]
            \arrow[no head, from=9-1, to=10-3]
            \arrow[no head, from=8-3, to=7-1]
            \arrow[no head, from=9-1, to=8-3]
            \arrow[no head, from=1-1, to=2-3]
            \arrow[no head, from=3-1, to=2-3]
            \arrow[no head, from=3-1, to=4-3]
            \arrow[no head, from=5-1, to=4-3]
            \arrow[no head, from=5-1, to=6-3]
            \arrow[no head, from=7-1, to=6-3]
            \arrow[no head, from=10-3, to=6-5]
            \arrow[no head, from=2-3, to=6-5]
        \end{tikzcd}
        }}$
        \caption{the marked poset $P_6$}
\label{fig:bspehrpos1}
    \end{subfigure}
    \begin{subfigure}[b]{0.49\linewidth}
        \centering
        $\vcenter{\hbox{
        \begin{tikzcd}[ampersand replacement=\&, baseline=-16\the\dimexpr\fontdimen22\textfont2\relax]
            {a_3} \\
            \&\& {x_3} \\
            {a_2} \&\&\&\& {x_2} \\
            \&\& {x_1} \\
            {a_1}
            \arrow[no head, from=5-1, to=4-3]
            \arrow[no head, from=3-1, to=4-3]
            \arrow[no head, from=4-3, to=3-5]
            \arrow[no head, from=3-5, to=2-3]
            \arrow[no head, from=2-3, to=1-1]
            \arrow[no head, from=3-1, to=2-3]
        \end{tikzcd}
        }}$
        \caption{the marked poset $P_3$}
\label{fig:bspehrpos2}
    \end{subfigure}
\caption{Examples of the marked posets from Example \ref{bsplem5-1}.}
\label{fig:bspehrpos}
\end{figure}
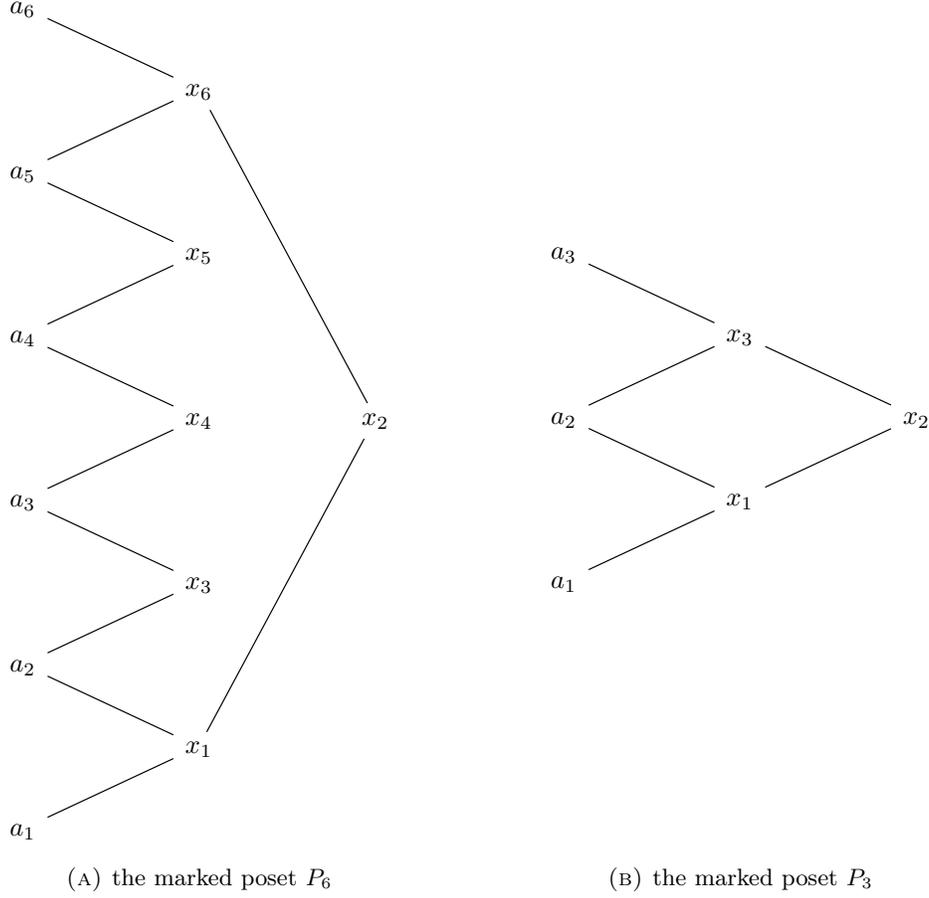

Another way not to have a problem with the product, is when there are a lot of similar linear extensions and the product becomes an exponent. We construct a class of marked posets, which have these exact properties. 

\begin{bsp}\label{bsplem5-1}
We consider the marked poset $$P_m=\{a_1,\ldots,a_m\}\cup\{x_1,\ldots,x_m\}$$ with $a_i\prec x_{i+1}$ and $x_i\prec a_{i}$ for $i \in \{2,\ldots ,m-1\}$, $a_1\prec x_1 \prec a_2$ and $x_1\prec x_2 \prec x_m$. We define a marking $\lambda: \{a_1,\ldots,a_m\}\rightarrow \Z$ on $P_m$ with $\lambda(a_{i+1})-\lambda(a_i)=c$ for some $c\in \R$ and all $i\in [m]$.\\
In Figure \ref{fig:bspehrpos} we can see examples of these posets for $m=6$ and $m=3$.\\
To calculate the Ehrhart polynomial for these marked posets, we need to understand the linear extensions of these posets and the chains the linear extensions are building between the marked elements.\\
At first, we define $I_i$ as the chain of unmarked elements between the marked elements $a_i$ and $a_{i+1}$ in a linear extension. We can observe, that always $x_1 \in I_1$ and $x_{i+1} \in I_{i}$ for $i\in [m-2]$. Therefore, $x_2$ is the only element which is not fixed.\\
In addition, we observe, that only if $x_2 \in I_1$, the linear extension does not have a descent in the chain which contains $x_2$. Hence, in each linear extension we have $(m-2)$ chains with only $1$ unmarked element and no descent in that chain, and one chain with $2$ unmarked elements, with 0 descents if $x_2 \in I_1$ in the linear extension or with 1 descent if $x_2 \in I_{i+1}$ for $i\in [m-2]$ in the linear extension.\\
First look at the cases, where $x_2\in I_{i+1}$ for $i\in [m-2]$. There are $2\cdot(m-3)$ cases, where $x_2 \in I_{i}$ can be above or below the elements $x_{i+1}$ for $i\in \{2,\ldots,m-2\}$ in the chain. And there is one case where $x_2 \in I_{m-1}$, because $x_2 \prec x_m$. Hence, we have $2(m-3)+1=2m-5$ linear extensions, where $x_2\in I_{i+1}$ for $i\in [m-2]$.\\
For each of these linear extension we calculate the product of Theorem \ref{ehrmarkorder} as follows,
\begin{align*}
\left[\prod_{i=1}^{m-2}\binom{n\cdot c-0+1}{1}\right]\cdot \binom{n\cdot c-1+2}{2}=
(nc+1)^{m-2}\frac{(nc+1)nc}{2}=
(nc+1)^{m-1}\frac{nc}{2}.
\end{align*}
There is one linear extension, where $x_2\in I_1$. The product for this case is
\begin{align*}
\left[\prod_{i=1}^{m-2}\binom{n\cdot c-0+1}{1}\right]\cdot \binom{n\cdot c-0+2}{2}=
(nc+1)^{m-2}\frac{(nc+2)(nc+1)}{2}=
(nc+1)^{m-1}\frac{nc+2}{2}.
\end{align*}
We can put this together to get the complete Ehrhart polynomial of this poset,
\begin{align*}
	\text{Ehr}_{\OP(P,\lambda)}(n)=&\sum_{\pi \in \mathcal{L}(P)}\prod_{\substack{I\subseteq P_\pi\\
			\text{marked chain}}} \binom{n(\lambda(b_I)-\lambda(a_I))-d+|I|}{|I|}\\
	=&(2m-5)\cdot((nc+1)^{m-1}\frac{nc}{2})+((nc+1)^{m-1}\frac{nc+2}{2})\\
	=&(m-2)nc(nc+1)^{m-1}+(nc+1)^{m-1}.
\end{align*}
For $m=3$ we have the poset in Figure \ref{fig:bspehrpos}. In this case the the Ehrhart polynomial looks even nicer. We get the Ehrhart polynomial
$$
\text{Ehr}_{\OP(P_3,\lambda)}(n)=(nc+1)^3.
$$
Neither the marked order polytope nor the marked chain polytope, is the dilated cube. We can observe this, by comparing the number of vertices.
\end{bsp}

\newpage

\end{document}